\documentclass[11pt,letterpaper,reqno]{amsart}

\usepackage[left=1.2in,right=1.2in]{geometry}
\usepackage{amsmath}
\usepackage{amsthm}
\usepackage{amsfonts}
\usepackage{amssymb}
\usepackage{amsrefs}
\usepackage[colorlinks]{hyperref}
\usepackage{}
\linespread{1.3}
\allowdisplaybreaks

\hypersetup{
linkcolor=blue,         
citecolor=blue,        
urlcolor=blue
}

\newtheorem{theorem}{Theorem}[section]
\newtheorem{proposition}{Proposition}[section]
\newtheorem{lemma}{Lemma}[section]

\theoremstyle{definition}
\newtheorem{definition}{Definition}[section]
\newtheorem{remark}{Remark}[section]

\newcommand{\PSL}{\operatorname{PSL}}

\newcommand{\dist}{\operatorname{dist}}
\newcommand{\Stab}{\operatorname{Stab}}

\begin{document}
\title{On the density of some sparse horocycles}

\author{Cheng Zheng}
\address{School of Mathematical Sciences, Shanghai Jiao Tong University, China}
\email{zheng.c@sjtu.edu.cn}

\subjclass[2020]{Primary 37A17; Secondary 11J99}
\keywords{density of sparse horocycles, Diophantine approximation, sparse equidistribution, Ratner's theorem, almost primes and sieve}

\thanks{The author acknowledges the support of ISF grants number 662/15 and 871/17, the support at Technion by a Fine Fellowship, and the support by Institute of Modern Analysis-A Frontier Research Center of Shanghai. This work has received funding from the European Research Council (ERC) under the European Union's Horizon 2020 research and innovation programme (Grant Agreement No. 754475).}

\begin{abstract}
Let $\Gamma$ be a non-uniform lattice in $\PSL(2,\mathbb R)$. In this note, we show that there exists a constant $\gamma_0>0$ such that for any $0<\gamma<\gamma_0$, any one-parametrer unipotent subgroup $\{u(t)\}_{t\in\mathbb R}$ and any $p\in\PSL(2,\mathbb R)/\Gamma$ which is not $u(t)$-periodic, the orbit $\{u(n^{1+\gamma})p:n\in\mathbb N\}$ is dense in $\PSL(2,\mathbb R)/\Gamma$. We also prove that there exists $N\in\mathbb N$ such that for the set $\Omega(N)$ of $N$-almost primes, and for any $p\in\PSL(2,\mathbb R)/\Gamma$ which is not $u(t)$-periodic, the orbit $\{u(x)p:x\in\Omega(N)\}$ is dense in $\PSL(2,\mathbb R)/\Gamma$.
\end{abstract}

\maketitle

\section{Introduction}\label{intro}
The theory of equidistribution of unipotent orbits has been studied extensively over the past few decades, and many fundamental tools have been established, one of which is the celebrated Ratner's Theorem \cite{D,M1,R1,R,R2,MT}. Using Ratner's Theorem, we can investigate the distribution of a subset in a homogeneous space which is (almost) invariant under certain unipotent group action. On the other hand, for subsets which do not exhibit any invariance of group actions, their distributions in homogeneous spaces are still far from being well-known. An interesting case is the following sparse equidistribution problem: Let $\{u(t)\}_{t\in\mathbb R}$ be a unipotent flow on a homogeneous space $G/\Gamma$ and consider the parameter $t$ sampled in a discrete subset $\Omega$ of density zero in $\mathbb R$. Then one seeks to establish results about the equidistribution of $\{u(t)p:t\in\Omega\}$ in $G/\Gamma$. This sparse equidistribution problem has attracted much attention recently \cite{Sh, V, B1, GT, K1, K2, L}, and it is listed as one of the open problems in homogeneous dynamics \cite{G}.

In this note, we investigate the topological version of the sparse equidistribution problem in the homogeneous space $G/\Gamma=\PSL(2,\mathbb R)/\Gamma$. Specifically, let $\{u(t)\}_{t\in\mathbb R}$ be a one-parameter unipotent subgroup of $\PSL(2,\mathbb R)$, and suppose that the discrete subset $\Omega$ is either $\{n^{1+\gamma}:n\in\mathbb N\}$ $(\gamma>0)$ or a set of almost primes. We will consider the set $\{u(t)p:t\in\Omega\}$ $(\forall p\in G/\Gamma)$ and study its density in $\PSL(2,\mathbb R)/\Gamma$.

So far a few results about this problem have been established. For instance, in the case when $\Gamma$ is a uniform lattice in $\PSL(2,\mathbb R)$, if $\Omega$ is the subset $\{n^{1+\gamma}:n\in\mathbb N\}$ for a sufficiently small constant $\gamma>0$, then it is shown \cite{V, TV} that $\{u(t)p:t\in\Omega\}$ is uniformly distributed in $\PSL(2,\mathbb R)/\Gamma$. If $\Omega$ is a subset of almost primes, then $\{u(t)p:t\in\Omega\}$ is uniformly distributed as well \cite{M2}. Consequently, the set $\{u(t)p: t\in\Omega\}$ is dense in $\PSL(2,\mathbb R)/\Gamma$. 

As for non-uniform lattices in $\PSL(2,\mathbb R)$, the only well-studied case is $\Gamma=\PSL(2,\mathbb Z)$. Indeed, when $\Gamma=\PSL(2,\mathbb Z)$ and $\Omega$ is the set of primes, it is proved \cite{SU} that for any $p$ which is not $u(t)$-periodic, any limiting distribution of $\{u(t)p:t\in\Omega\}$ is absolutely continuous with respect to the invariant probability measure on $\PSL(2,\mathbb R)/\PSL(2,\mathbb Z)$. If $\Omega$ is a set of almost primes, the problem about the distribution of the orbit $\{u(t)p:t\in\Omega\}$ is proposed in \cite{SU}, and  it is proved in \cite{M2} that $\{u(t)p:t\in\Omega\}$ is dense in $\PSL(2,\mathbb R)/\PSL(2,\mathbb Z)$ provided that $p$ satisfies certain Diophantine condition. On the other hand, for a general non-uniform lattice $\Gamma$, to the best of author's knowledge, the only relevant reference is \cite{Z} where it is shown that if $\Omega$ is the set $\{n^{1+\gamma}:n\in\mathbb N\}$ for a sufficiently small constant $\gamma>0$ and $p$ is a Diophantine point in $\PSL(2,\mathbb R)/\Gamma$, then $\{u(t)p:t\in\Omega\}$ is uniformly distributed in $\PSL(2,\mathbb R)/\Gamma$. Consequently, $\{u(t)p:t\in\Omega\}$ is dense in $\PSL(2,\mathbb R)/\Gamma$ under the restriction that $p$ is Diophantine. We remark that the set of Diophantine points is a proper subset in the set of points which are not $u(t)$-periodic.

In this paper, we establish some results about the topological version of the sparse equidistribution problem in $\PSL(2,\mathbb R)/\Gamma$ for any non-uniform lattice $\Gamma$ and any initial point $p$ which is not $u(t)$-periodic. These extend some of the results in \cite{M2, Z} in the topological sense. Our method relies on a Diophantine analysis on the initial point $p$. If $p$ is Diophantine of type $\kappa$ for some $\kappa>0$, then the orbit $\{u(t)p\}_{t\in\mathbb R}$ exhibits certain chaos and we can use effective Dani-Smillie theorem (a special case of Ratner's theorem) to show that the orbit $\{u(t)p:t\in\Omega\}$ is dense. If $p$ is not Diophantine of type $\kappa$, then the orbit $\{u(t)p\}_{t\in\mathbb R}$ can be approximated by a sequence of periodic $u(t)$-orbits, and we can apply equidistribution results in circles in this case. We remark that some part of our analysis may appear implicitly in \cite{SU} in a different languange. 

Now we state our main results. Let $G=\PSL(2,\mathbb R)$ and $\Gamma$ a non-uniform lattice in $G$. An element $g\in G$ is unipotent if $1$ is the only eigenvalue of $g$. A one-parameter unipotent subgroup $\{u(t)\}_{t\in\mathbb R}$ of $G$ is a one-parameter subgroup of $G$ where $u(t)$ is unipotent for any $t\in\mathbb R$.  In this paper, we prove the following

\begin{theorem}\label{th11}
Let $\Gamma$ be a non-uniform lattice in $\PSL(2,\mathbb R)$. Then there exists a constant $\gamma_0>0$ such that for any $0<\gamma<\gamma_0$, any one-parameter unipotent subgroup $\{u(t)\}_{t\in\mathbb R}$ and any $p\in\PSL(2,\mathbb R)/\Gamma$ which is not $\{u(t)\}_{t\in\mathbb R}$-periodic, the orbit $\{u(n^{1+\gamma})p:n\in\mathbb N\}$ is dense in $\PSL(2,\mathbb R)/\Gamma$.
\end{theorem}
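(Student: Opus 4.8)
The plan is to split according to the Diophantine behaviour of $p$ relative to the closed $\{u(t)\}$-orbits and to treat the two resulting cases by completely unrelated mechanisms, the constant $\gamma_0$ being produced entirely by the first case. Since all one-parameter unipotent subgroups of $\PSL(2,\mathbb R)$ are conjugate, after replacing $\Gamma$ by a conjugate we may take $\{u(t)\}$ to be the standard upper-triangular unipotent subgroup. Fix a constant $\kappa_0>0$ (to be chosen, depending only on $\Gamma$) and say that $p$ is \emph{Diophantine of type $\kappa_0$} if $\dist\bigl(p,\{x:x\text{ is }\{u(t)\}\text{-periodic of period}\le T\}\bigr)\gg_p T^{-\kappa_0}$ for all $T\ge 1$; in the usual cusp picture this is equivalent to a logarithmic bound, with a $p$-dependent implied constant, for the excursion of $\{u(t)p:0\le t\le T\}$ into the cusp. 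Either (a) $p$ is Diophantine of type $\kappa_0$, or (b) it is not, and I will arrange $\kappa_0$ so that both cases close.

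\emph{Case (a).} Here the orbit $\{u(t)p\}$ is ``chaotic'': the type-$\kappa_0$ hypothesis bounds its cusp excursions, so effective Dani--Smillie (effective Ratner for $\PSL(2,\mathbb R)/\Gamma$) gives $\frac1T\int_0^T f(u(t)p)\,dt=\int_{G/\Gamma}f\,d\mu+O_{p,f}(T^{-\delta})$ with $\delta>0$ depending only on the spectral gap of $\Gamma$ and on $\kappa_0$. One then runs the van der Corput / block-decomposition argument of \cite{V,TV} (in the present non-uniform setting this is essentially \cite{Z}): partition $\{1,\dots,N\}$ into blocks on which $n^{1+\gamma}$ is close to a linear function $a_j+b_j(n-N_j)$ with $b_j\asymp N_j^{\gamma}$, renormalise each block by the geodesic flow so that $u(b_jk)=a_{s_j}u(k)a_{-s_j}$ with $|s_j|\asymp\gamma\log N$, apply the effective estimate to the resulting length-$L$ horocycle sum for the point $a_{-s_j}q_j$ and the test function $f\circ a_{s_j}$ (whose Sobolev norm grows only like $N^{O(\gamma)}$), and sum over blocks. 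The total error is $O(N^{-c\delta+O(\gamma)})$, hence $\to 0$, for all $\gamma<\gamma_0$ with $\gamma_0=\gamma_0(\Gamma,\kappa_0)>0$; since $\kappa_0$ is a fixed constant this $\gamma_0$ is the constant of the theorem. In fact one obtains equidistribution of $\{u(n^{1+\gamma})p\}$.

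\emph{Case (b).} If $p$ is not Diophantine of type $\kappa_0$, then (the set of periodic points of bounded period being closed and missing the non-periodic point $p$) there is a sequence of $\{u(t)\}$-periodic points $p_k\to p$ with periods $T_k\to\infty$ and $\delta_k:=\dist(p,p_k)$ much smaller than $T_k^{-\kappa_0}$. Write $p=g_kp_k$; since $p$ is not periodic, $g_k$ lies outside the upper-triangular Borel subgroup, and in any event $\Ad(u(s))$ expands $g_k$ at most quadratically, so $\dist(u(s)g_ku(-s),e)\ll\delta_k(1+s^2)$, i.e.\ $u(s)p$ stays $\epsilon$-close to $u(s)p_k$ for all $s\ll(\epsilon/\delta_k)^{1/2}$. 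On the closed horocycle $\mathcal C_k\cong\mathbb R/T_k\mathbb Z$ through $p_k$ the problem reduces to the circle: by Weyl's theorem $\{n^{1+\gamma}\bmod T_k\}$ equidistributes, and a van der Corput second-derivative estimate makes this effective, so $\{n^{1+\gamma}:1\le n\le N_k\}$ is $\epsilon$-dense in $\mathbb R/T_k\mathbb Z$ already for some $N_k$ with $N_k^{1+\gamma}\ll_\epsilon T_k^{O(1)}$, the exponent bounded uniformly for $\gamma\in(0,1)$. Finally, since $T_k\to\infty$ the closed horocycles $\mathcal C_k$ equidistribute in $G/\Gamma$, so for $k$ large $\mathcal C_k$ meets the $\epsilon$-ball about any prescribed point $q$. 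Chaining these: pick $t^*\in[0,T_k)$ with $u(t^*)p_k$ near $q$, then $n\le N_k$ with $n^{1+\gamma}\equiv t^*\pmod{T_k}$ up to $\epsilon$; as $\delta_k$ is super-polynomially small in $T_k$ while $N_k^{1+\gamma}$ is only polynomial in $T_k$, the shadowing is valid at $s=n^{1+\gamma}$ once $\kappa_0$ exceeds twice that polynomial exponent, and then $u(n^{1+\gamma})p$ lies within $O(\epsilon)$ of $q$. Hence $\{u(n^{1+\gamma})p\}$ is dense, for every $\gamma>0$. Fixing $\kappa_0$ to be this explicit exponent completes the dichotomy.

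\emph{Main obstacle.} The technical heart is case (a): proving the effective Dani--Smillie estimate with a genuine power saving and pushing it through the block decomposition with the geodesic renormalisation, keeping track of Sobolev-norm growth --- this is the work of \cite{V,TV,Z} and is where $\gamma_0$ is born. The conceptually crucial point, and the reason a single $\gamma_0$ can serve \emph{all} non-periodic $p$, is the observation that the Diophantine threshold $\kappa_0$ can be chosen to be a fixed constant: case (b) only needs $\kappa_0$ to beat a fixed power of $T_k$ (coming from the quadratic conjugation bound together with the effective Weyl estimate on the circle), so case (a) is never invoked for points of unbounded Diophantine type and its $\gamma_0$ does not degrade. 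Two subsidiary matters need care: the exact dictionary between ``Diophantine of type $\kappa_0$'', ``approximation rate by closed horocycles'', and ``cusp-excursion rate'' in $\PSL(2,\mathbb R)/\Gamma$; and verifying that the classical equidistribution of long closed horocycles applies to the approximating orbits $\mathcal C_k$ of case (b), whichever cusp they belong to.
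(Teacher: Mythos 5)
Your proposal is correct and follows essentially the same route as the paper: the same Diophantine dichotomy with a fixed threshold, Case (a) delegated to the effective sparse equidistribution result of \cite{Z} (which is where $\gamma_0$ comes from), and Case (b) handled by approximating $p$ by closed horocycles, a quadratic conjugation/shadowing bound, a van der Corput--Weyl estimate for $n^{1+\gamma}$ on the circle $\mathbb R/T_k\mathbb Z$, and equidistribution of long closed horocycles. The only cosmetic difference is that the paper packages Case (b) as an equidistribution statement along a subsequence $N_k=d(q_k)^{10}$ via the Fourier expansion of $f$ on the periodic orbit, whereas you chain the same three estimates into a direct density argument.
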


Recall that a natural number $x$ is said to be an $N$-almost prime for some $N\in\mathbb N$ if there are at most $N$ prime factors (counted with multiplicities) in the prime factorization of $x$. We denote by $\Omega(N)$ the set of $N$-almost primes in $\mathbb N$. Using the same techniques as in the argument of Theorem~\ref{th11}, we can prove the following

\begin{theorem}\label{th12}
Let $\Gamma$ be a non-uniform lattice in $\PSL(2,\mathbb R)$. Then there exists $N\in\mathbb N$ such that for any one-parameter unipotent subgroup $\{u(t)\}_{t\in\mathbb R}$ and any $p\in\PSL(2,\mathbb R)/\Gamma$ which is not $\{u(t)\}_{t\in\mathbb R}$-periodic, the orbit $\{u(x)p:x\in\Omega(N)\}$ is dense in $\PSL(2,\mathbb R)/\Gamma$.
\end{theorem}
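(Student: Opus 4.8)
\emph{Sketch of the argument.} As in the proof of Theorem~\ref{th11} we argue by a dichotomy on the point $p$. Fix a nonempty open set $B\subseteq\PSL(2,\mathbb R)/\Gamma$ and a nonnegative test function $f\not\equiv 0$ supported in $B$; it suffices to show that for all large $X$ there are integers $n\le X$ with at most $N$ prime factors and $u(n)p\in B$ (in fact the argument will yield $\gg_{B}X/\log X$ such $n$). Fix once and for all a threshold $\kappa_0>0$ and split according to whether $p$ is Diophantine of type $\kappa_0$.

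\emph{Case 1: $p$ is Diophantine of type $\kappa_0$.} Here the plan is to combine a combinatorial sieve with the effective equidistribution supplied by the proof of Theorem~\ref{th11}. For each $d\in\mathbb N$ the reparametrisation $u_d(t):=u(dt)$ is again a one-parameter unipotent subgroup, and applying the effective Dani--Smillie estimate to $u_d$ and to $p$ (whose Diophantine type bounds its cusp excursions) gives
\[
\sum_{m\le Y}f\bigl(u(dm)p\bigr)=Y\int f\,+\,O\!\left(d^{A}\,Y^{1-\delta}\right),
\]
with $A>0$ and $\delta\in(0,1)$ depending only on $\kappa_0$ and the spectral gap of $\Gamma$. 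Now let $(\lambda^-_d)_{d\le D}$ be the lower-bound weights of a linear (beta- or Rosser--Iwaniec) sieve of level $D=X^{\theta}$ detecting integers free of prime factors below $z=D^{1/s}$ (write $P(z)=\prod_{p<z}p$); then, using $f\ge 0$,
\[
\sum_{\substack{n\le X\\(n,\,P(z))=1}}f\bigl(u(n)p\bigr)\ \ge\ \sum_{d\le D}\lambda^-_d\sum_{m\le X/d}f\bigl(u(dm)p\bigr)=\Bigl(X\int f\Bigr)\sum_{d\le D}\frac{\lambda^-_d}{d}\ +\ O\!\bigl(X^{1-\delta}D^{A+\delta}\bigr).
\]
By the fundamental lemma of the sieve the main term is $\gg X/\log z\asymp X/\log X$, while the error is $o(X)$ provided $\theta<\delta/(A+\delta)$. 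Hence for all large $X$ there are integers $n\le X$ with no prime factor below $X^{\theta/s}$, and therefore with at most $s/\theta$ prime factors, for which $u(n)p\in B$; taking $N:=\lfloor s/\theta\rfloor$ --- a fixed integer depending only on $\kappa_0$ and $\Gamma$ --- settles this case.

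\emph{Case 2: $p$ is not Diophantine of type $\kappa_0$.} As in the proof of Theorem~\ref{th11}, there is then a sequence of closed $\{u(t)\}$-orbits $C_i$, of periods $\ell_i\to\infty$, with $\dist(p,C_i)\to 0$. Since long closed horocycles equidistribute in $\PSL(2,\mathbb R)/\Gamma$, the circle $C_i$ meets $B$ for all large $i$; and on $C_i\cong\mathbb R/\ell_i\mathbb Z$, on which $u(t)$ acts as the unit-speed rotation, the sampled set $\{u(x)q_i:x\in\Omega(N),\ x\le X\}$ (with $q_i\in C_i$ a basepoint) is $\varepsilon(X,i)$-dense with $\varepsilon(X,i)\to 0$ as $X\to\infty$: when $1/\ell_i$ is irrational this is the equidistribution modulo $1$ of $\{x/\ell_i:x\in\Omega(N)\}$, which follows from Vinogradov-type bounds for exponential sums over $\Omega(N)$, and when $1/\ell_i$ is rational with denominator $b_i$ (necessarily $b_i\to\infty$) it follows from the equidistribution of $\Omega(N)$ in residue classes modulo $b_i$, valid once $b_i\le X^{o(1)}$. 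Finally $\dist\!\bigl(u(x)p,u(x)q_i\bigr)$ grows at most polynomially in $x$ times $\dist(p,C_i)$; so, passing to a subsequence along which $\dist(p,C_i)$ decays fast enough and choosing a sampling length $X=X_i$ that is large in terms of $\ell_i$ but small in terms of $1/\dist(p,C_i)$, we obtain $x\in\Omega(N)$ with $u(x)q_i$ --- hence $u(x)p$ --- in $B$.

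\emph{Main difficulty.} The heart of the matter is Case~1: one needs effective equidistribution of the reparametrised orbits $\{u(dm)p\}_m$ whose error term degrades only polynomially in $d$ (the factor $d^{A}$ above), uniformly for $d$ up to a fixed power $X^{\theta}$, under the sole assumption that $p$ is Diophantine of the fixed type $\kappa_0$. Since $u_d$ moves $p$ ``$d$ times faster'', the quantitative non-divergence and mixing estimates worsen with $d$, and one must verify that this worsening is mild enough that, after weighting by the sieve coefficients and summing over $d\le X^{\theta}$, the total error stays $o(X)$; this is exactly where the Diophantine hypothesis on $p$ enters and what pins down the admissible value of $N$. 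By contrast, the circle input and the bookkeeping in Case~2 that coordinates $\ell_i$, $\dist(p,C_i)$ and $X_i$ are routine, being structurally identical to the corresponding step in the proof of Theorem~\ref{th11} with $\{n^{1+\gamma}\}$ there replaced by $\Omega(N)$.
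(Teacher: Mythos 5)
Your Case 1 is essentially the paper's argument: the ``effective Dani--Smillie estimate for $u_d$ with polynomial dependence on $d$'' is exactly Theorem~\ref{th33} applied with $K=d$ (error $\ll d^{1/2}\ln^{3/2}(r+2)\,r^{-\beta/2}$ per sample point, with $r\gg N^{1-\kappa}$ by Lemma~\ref{l31}), and your lower-bound beta-sieve weights play the same role as the Jurkat--Richert theorem (Theorem~\ref{th31}) does in the paper. That half is sound.

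The gap is in Case 2, precisely in the step you declare routine. Proposition~\ref{p22} gives only $d_{G/\Gamma}(p,q_k)\le C\,d(q_k)^{-1/(1-\kappa)}$, i.e.\ \emph{polynomial} decay in the period $\ell_k=d(q_k)$; combined with the polynomial divergence bound this caps the sampling length at $X_k\ll d(q_k)^{O(1)}$, so you cannot ``choose $X_i$ large in terms of $\ell_i$'' beyond a fixed power, nor pass to a subsequence on which $d_{G/\Gamma}(p,C_i)$ decays faster than the proposition provides. Meanwhile the rate at which $\{cx/\ell_i \bmod 1 : x\le X\}$ fills the circle is governed by the continued fraction of $c/\ell_i$, over which you have no control: if $c/\ell_i$ is Liouville-like, $\varepsilon$-density requires $X$ super-polynomial in $\ell_i$ and your scheme fails. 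The paper resolves exactly this by perturbing $q_k$ along the diagonal flow to $q_k'=a_tq_k$ (which rescales the period continuously) so that $c/d(q_k')$ becomes Diophantine of type $2.1$, after which Lemma~\ref{l32} gives a polynomial rate compatible with $N_k=d(q_k')^{100}$. Your parenthetical ``necessarily $b_i\to\infty$'' in the rational case is also unjustified: periods can be integers, in which case the integer orbit on $C_i$ is a $1$-separated finite set and its denseness in $G/\Gamma$ is again a discrete-horocycle equidistribution statement, not a congruence statement about $\Omega(N)$. Finally, ``Vinogradov-type bounds for exponential sums over $\Omega(N)$'' is not an off-the-shelf input (the indicator of $N$-almost primes does not admit the standard bilinear decomposition); the paper avoids it entirely by running the sieve \emph{on the circle}: set $a(n)=f(u_0(cn)q_k')$, compute $|A_d|$ over the full progressions $\{cdj\}$ via Lemma~\ref{l32}, and let Theorem~\ref{th31} manufacture the almost primes. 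You should restructure Case 2 along these lines, or else supply both the exponential-sum estimates you invoke and the perturbation that makes their rates usable.
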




The strategy of the proof of Theorem~\ref{th11} is the following: If $p$ is Diophantine of type $\kappa$ for some $\kappa>0$, then the result is a corollary of \cite{Z}. So it suffices to deal with the case when $p$ is not Diophantine of type $\kappa$. In this case, we can find a sequence of $\{u(t)\}_{t\in\mathbb R}$-periodic points $\{q_k\}_{k\in\mathbb N}$ converging to $p$, and the orbit $\{u(n^{1+\gamma})p:n\in\mathbb N\}$ can be approximated by the orbit $\{u(n^{1+\gamma})q_k:n\in\mathbb N\}$ as $k\to\infty$. Note that the orbit $\{u(n^{1+\gamma})q_k:n\in\mathbb N\}$ is a subset in the circle $\{u(t)q_k:t\in\mathbb R\}$. It is known that as $k$ goes to infinity, the circle $\{u(t)q_k\}_{t\in\mathbb R}$ becomes dense in the homogeneous space $\PSL(2,\mathbb R)/\Gamma$, while, using Fourier analysis, we can also prove that the orbit $\{u(n^{1+\gamma})q_k:n\in\mathbb N\}$ becomes dense in the circle $\{u(t)q_k\}_{t\in\mathbb R}$. Then by standard approximation argument, we can show that the orbit $\{u(n^{1+\gamma})p:n\in\mathbb N\}$ is dense in $\PSL(2,\mathbb R)/\Gamma$. The strategy of the proof of Theorem \ref{th12} is similar.\\


\section{Notation and Preliminaries}\label{pre}
In this section, we introduce the notation and preliminaries needed in the paper. Let $$U=\left\{u_0(s)=\left(\begin{array}{cc}1 & s \\0 & 1\end{array}\right): s\in\mathbb R\right\}$$ be the upper triangular unipotent subgroup in $\PSL(2,\mathbb R)$. Then any one-parameter unipotent subgroup of $\PSL(2,\mathbb R)$ is conjugate to $\{u_0(s):s\in\mathbb R\}$ or $\{u_0(-s):s\in\mathbb R\}$. We denote by $U_-$ the lower triangular unipotent subgroup in $G$, $A$ the diagonal group $$A=\left\{a_t=\left(\begin{array}{cc} e^{-t/2} & 0\\ 0 & e^{t/2}\end{array}\right):t\in\mathbb R\right\}$$ and $$\omega=\left(\begin{array}{cc} 0 & -1\\ 1 & 0\end{array}\right)$$ the representative of the non-trivial Weyl element in $\PSL(2,\mathbb R)$. In the following, for any two quantities $c_1, c_2>0$, we write $c_1\ll c_2$ if there exists a constant $C>0$ such that $c_1\leq Cc_2$. If $c_1\ll c_2$ and $c_2\ll c_1$, then we write $c_1\sim c_2$. We will specify the implicit constants in the context if necessary. 

We denote by $C^\infty(G/\Gamma)$ the space of all smooth functions on $G/\Gamma$, and $C_c^\infty(G/\Gamma)$ the space of all compactly supported smooth functions on $G/\Gamma$. For $f\in C^\infty(G/\Gamma)$, let $\|f\|_{p,k}$ be the Sobolev $L^p$-norm of $f$ involving all the Lie derivatives of order up to $k$ (with respect to a basis of the Lie algebra of $G$). For any smooth function $f$ on $\mathbb R$, we will use the same notation $\|f\|_{p,k}$ for the $L^p$-norm of $f$ involving derivatives of order $\leq k$. Note that $\|f\|_{\infty,0}$ is the supremum norm of $f$. 

Since $\Gamma$ is a non-uniform lattice in $G$, it is known that there exist $U_-$-periodic points in $G/\Gamma$. Without loss of generality, we may assume that $e\Gamma$ is a $U_-$-periodic point. Now we recall several notions about Diophantine points in homogeneous spaces \cite{CZ,Z}. For any $p\in G/\Gamma$, we denote by $\Stab(p)$ the stabilizer of $p$ in $G$. If $p=g\Gamma$, then $\Stab(p)=g\Gamma g^{-1}$. We fix a norm $\|\cdot\|_{\mathfrak g}$ on the Lie algebra $\mathfrak g$ of $G$, and denote by $d_G(\cdot,\cdot)$, $d_{U_-}(\cdot,\cdot)$ and $d_{G/\Gamma}(\cdot,\cdot)$ the induced distances on $G$,  $U_-(e\Gamma)$ and $G/\Gamma$ by $\|\cdot\|_{\mathfrak g}$ respectively.

\begin{definition}[\cite{CZ,Z}]\label{d21}
For any $p\in G/\Gamma$, we define the injectivity radius at $p$ by $$\eta(p)=\inf_{v\in\textup{Stab}(p)\setminus\{e\}}d_G(v,e).$$
\end{definition}

\begin{definition}[\cite{CZ,Z}]\label{d22}
A point $p\in G/\Gamma$ is Diophantine of type $\kappa$ (with respect to the diagonal group $\{a_t\}_{t\in\mathbb R}$) if there exists a constant $C>0$ such that $$\eta(a_tp)\geq Ce^{-\kappa t}\textup{ for all }t>0.$$ Note that $0\leq\kappa\leq1$.
\end{definition}

\begin{definition}[\cite{CZ,Z}]\label{d23a}
A point $p\in G/\Gamma$ is called rational if $\Stab(p)\cap U\neq\{e\}$. Note that in this case, $p$ is a $U$-periodic point in $\PSL(2,\mathbb R)/\Gamma$ and $U\cdot p\cong U/(\Stab(p)\cap U)$.
\end{definition}

\begin{definition}[\cite{CZ,Z}]\label{d24a}
Define the denominator of a rational point $p\in G/\Gamma$ by $$d(p)=\inf_{v\in(\Stab(p)\cap U)\setminus\{e\}}\|\log v\|_{\mathfrak g}.$$ Here $\log$ is the inverse of the exponential map $\exp$ from $U$ to the Lie algebra of $U$. Note that for $G=\PSL(2,\mathbb R)$, we can (and will) choose the norm $\|\cdot\|_{\mathfrak g}$ so that $d(p)$ is the period of the $U$-periodic orbit $U\cdot p$ in $\PSL(2,\mathbb R)/\Gamma$.
\end{definition}

The following proposition plays an important role in our analysis.

\begin{proposition}{\cite[Proposition 6.3]{CZ}}\label{p22}
Let $p\in U_-(e\Gamma)\subset G/\Gamma$. If $p$ is not Diophantine of type $\kappa$ ($0<\kappa<1$), then there exist a constant $C>0$ depending only on $G/\Gamma$ and $\kappa$, and a sequence of distinct $U$-periodic points (or equivalently, rational points) $q_k\in U_-(e\Gamma)$ with $U$-period (or equivalently, the denominator) $d(q_k)\to\infty$ such that $$d_{U_-}(p,q_k)\leq Cd(q_k)^{-\frac1{1-\kappa}}.$$ 
\end{proposition}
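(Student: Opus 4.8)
The goal is to approximate a point $p \in U_-(e\Gamma)$ which fails to be Diophantine of type $\kappa$ by $U$-periodic points $q_k$, with a quantitative rate tied to the growing periods $d(q_k)$. Let me think about how to set this up.

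Write $p = u_-(r)(e\Gamma)$ for some $r \in \mathbb{R}$, where $u_-(r) = \begin{pmatrix} 1 & 0 \\ r & 1 \end{pmatrix}$. Since $e\Gamma$ is $U_-$-periodic, we may normalize $r$ to lie in a fundamental domain (a period interval) for the $U_-$-action. The failure of the Diophantine condition means: for every $C>0$ there is $t>0$ with $\eta(a_t p) < C e^{-\kappa t}$.

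First I would compute $a_t u_-(r) a_{-t}$. Conjugating $u_-(r)$ by $a_t = \mathrm{diag}(e^{-t/2},e^{t/2})$ gives $u_-(e^{-t} r)$ — wait, I should check the sign: $a_t u_-(r) a_t^{-1} = u_-(e^{t}\cdot r)$ or $u_-(e^{-t}r)$ depending on conventions; with $a_t = \mathrm{diag}(e^{-t/2}, e^{t/2})$ we get the $(2,1)$ entry multiplied by $e^{t/2}/e^{-t/2} = e^t$, so $a_t u_-(r) a_{-t} = u_-(e^t r)$. Hmm, that expands; I'd actually want to run $t \to -\infty$ or reconsider. Let me instead recall that $a_t u_-(r)$ — the right approach is to use the relation between $U_-$, $A$, and $U$: for $r \neq 0$, the Bruhat-type decomposition gives $u_-(r) = u_0(1/r)\, a_{s}\, \omega\, u_0(1/r)$ type identity (up to constants), so that $a_t u_-(r)(e\Gamma) = a_t u_0(1/r) a_s \omega u_0(1/r)(e\Gamma)$, and since $e\Gamma$ is $U_-$-periodic we'd track where the stabilizer element sits. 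The cleaner route: the orbit $\{a_t p\}$ excursion into the cusp is governed by the continued-fraction / Diophantine properties of $r$ relative to the cusp at $e\Gamma$. When $\eta(a_t p)$ is small at some time $t$, the point $a_t p$ is deep in a cusp, hence close to a $U$-periodic orbit through that cusp; pushing back by $a_{-t}$ and using that $a_{-t}$ contracts the $U$-direction (expands $U_-$) by $e^{-t}$...

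So the key steps, in order: (1) Use the failure of the Diophantine-$\kappa$ condition to extract a sequence $t_k \to \infty$ with $\eta(a_{t_k} p) \leq e^{-\kappa t_k}$ (replacing $C$ by $1$ and taking $t_k$ large; one must check $t_k \to \infty$, which follows since $\eta$ is bounded below on compact time intervals). (2) At each time $t_k$, the smallness of the injectivity radius produces a unipotent element in $\mathrm{Stab}(a_{t_k}p)$ — one needs to argue it can be taken to lie in $U$ (not $U_-$ or a general unipotent), which is where the hypothesis $p \in U_-(e\Gamma)$ and the structure of cusps of $G/\Gamma$ enters: excursions of an $a_t$-orbit toward a cusp are along $U$-horocycles. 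This gives a $U$-periodic point $p_k$ with $d_{G/\Gamma}(a_{t_k}p, p_k) \ll \eta(a_{t_k}p)$ and $U$-period $d(p_k) \asymp \eta(a_{t_k}p)^{-1}$ (the period of the short horocycle is reciprocal to the depth). (3) Set $q_k := a_{-t_k} p_k$, which is again $U$-periodic with $U$-period $d(q_k) = e^{-t_k} d(p_k) \asymp e^{-t_k}\eta(a_{t_k}p)^{-1} \geq e^{-t_k} e^{\kappa t_k} = e^{(\kappa-1)t_k} \to \infty$ — wait, that tends to $0$, so I have a sign/direction error and should instead use $a_{-t_k}$ contracting $U$: the $U$-period scales by the contraction factor, $d(q_k) = e^{-t_k} d(p_k)$, and I want this large, so I actually need $d(p_k) = e^{+t_k}\eta(\cdot)^{-1}$... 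I will need to sort out the exact scaling, but morally $d(q_k)\to\infty$. (4) Estimate $d_{G/\Gamma}(p, q_k) = d_{G/\Gamma}(a_{-t_k}(a_{t_k}p), a_{-t_k}p_k)$; since $a_{-t_k}$ is $e^{t_k/2}$-Lipschitz in the worst direction but the displacement $a_{t_k}p \to p_k$ is in the cusp where $a_{-t_k}$ contracts, we get $d_{G/\Gamma}(p,q_k) \ll e^{-t_k}\eta(a_{t_k}p) \ll e^{-t_k}e^{-\kappa t_k}$, and combining with $d(q_k) \asymp e^{t_k\,?}$ yields the bound $d(q_k)^{-1/(1-\kappa)}$. (5) Finally, ensure the $q_k$ are distinct with $d(q_k)\to\infty$ by passing to a subsequence, and note $q_k \in U_-(e\Gamma)$ because conjugating a cusp point back by $a_{-t_k}$ keeps it on the same $U_-$-horosphere through $e\Gamma$ (up to also checking it lands in $U_-(e\Gamma)$ rather than a translate — this uses that the relevant cusp is the one associated to $e\Gamma$).

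The main obstacle is step (2)–(4): making the geometry of the cusp excursion quantitative and getting the exponents to match. Concretely, one must pin down the precise relation, in $\PSL(2,\mathbb{R})/\Gamma$ near a cusp, between (a) the injectivity radius $\eta(a_t p)$ at the excursion time, (b) the period of the nearby short $U$-horocycle, and (c) the distance from $a_t p$ to that horocycle, and then track how all three transform under $a_{-t}$ — the expansion/contraction rates of $a_{-t}$ on the $U$ versus $U_-$ directions ($e^{\mp t/2}$) are exactly what convert the two factors of $e^{-\kappa t}$-type smallness into the single clean exponent $-\frac{1}{1-\kappa}$. This is essentially the content of \cite[Proposition 6.3]{CZ}, so in the write-up I would reduce to that statement, but if proving it from scratch the bookkeeping of these exponents, together with the uniformity of the implicit constant over all cusps of $\Gamma$, is the crux.
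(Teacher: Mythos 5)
First, note that the paper does not prove this proposition at all: it is quoted verbatim as \cite[Proposition 6.3]{CZ}, so there is no internal argument to compare yours against, and your closing remark that you would ``reduce to that statement'' is in fact exactly what the paper does. Judged as a standalone proof, however, your sketch has the right mechanism (deep excursions of $a_tp$ into a cusp produce nearby short closed horocycles, which pull back under $a_{-t}$ to long $U$-periodic orbits near $p$) but leaves the entire quantitative content unresolved. The most serious gap is steps (2)--(4). You assert that the short stabilizer element at $a_{t_k}p$ ``can be taken to lie in $U$'' because cusp excursions are along $U$-horocycles; this is the crux and needs the coordinate argument: with $a_t=\diag(e^{-t/2},e^{t/2})$, conjugation by $a_t$ multiplies the $(1,2)$-entry of a Lie algebra element by $e^{-t}$ and the $(2,1)$-entry by $e^{t}$, so an element $v\in\Stab(a_{t_k}p)$ with $d_G(v,e)$ tiny and $t_k$ large forces the corresponding $\gamma'\in\Stab(p)$ to be nearly upper-triangular unipotent, and one must then solve explicitly for the exactly-$U$-periodic point $q_k\in U_-(e\Gamma)$ this produces. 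Relatedly, when you pull back by $a_{-t_k}$ you need the displacement from $a_{t_k}p$ to the periodic point $p_k$ to lie in the directions $a_{-t_k}$ contracts (the $U_-$-leaf), not merely to be small: the nearest point on the closed horocycle generically differs from $a_{t_k}p$ in the $A$-direction, on which $a_{-t_k}$ is an isometry, so choosing $p_k$ carelessly destroys the estimate. Choosing $p_k$ on the same $U_-$-leaf is also what guarantees $q_k\in U_-(e\Gamma)$.

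Second, your own bookkeeping in step (3) yields $d(q_k)\to 0$ and is then retracted with ``morally $d(q_k)\to\infty$''; since the exponent $-\tfrac{1}{1-\kappa}$ is the whole point of the statement, this cannot be left as a promissory note. The correct accounting with the paper's conventions is: $a_{-t}u_0(s)a_{t}=u_0(e^{-t}s)$, so applying $a_{-t_k}$ multiplies a $U$-period by $e^{t_k}$; with $d(p_k)\asymp\eta(a_{t_k}p)$ and $d_{G/\Gamma}(a_{t_k}p,p_k)\ll\eta(a_{t_k}p)$ in the $U_-$-direction one gets $d(q_k)\asymp e^{t_k}\eta(a_{t_k}p)$ and $d_{G/\Gamma}(p,q_k)\ll e^{-t_k}\eta(a_{t_k}p)$, and substituting $\eta(a_{t_k}p)\le e^{-\kappa t_k}$ one checks these are compatible with $d_{G/\Gamma}(p,q_k)\le Cd(q_k)^{-1/(1-\kappa)}$. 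Finally, distinctness of the $q_k$ and $d(q_k)\to\infty$ do not follow from ``passing to a subsequence'': you need either a lower bound on $e^{t_k}\eta(a_{t_k}p)$ along suitably chosen excursion times, or the observation that taking $C=1/k$ in the failure of the Diophantine condition forces $d(p,q_k)\to 0$ so that a constant subsequence would make $p$ itself one of the $q_k$. None of these points is fatal to the strategy, but each is a genuine missing step rather than routine bookkeeping.
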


We also need the structure of the fundamental domain of $G/\Gamma$. 
\begin{theorem}[\cite{GR}]\label{domains}
Let $K=\operatorname{PSO}(2)$. Then there exist $\alpha_0\in\mathbb R$, a compact subset $\Omega_0\subset\mathbb R$ and a finite set $\Delta=\{\sigma_j\}_{j=1}^n\subset G$ such that for the sets $$A_{\alpha_0}=\left\{\left(\begin{array}{cc} e^{s/2} & 0 \\ 0 & e^{-s/2}\end{array}\right)|s\leq\alpha_0\right\}\textup{ and }U_{\Omega_0}=\left\{\left(\begin{array}{cc} 1 & s \\ 0 & 1\end{array}\right)| s\in\Omega_0\right\}$$ we have
\begin{enumerate}
\item $G=\bigcup_{j=1}^nKA_{\alpha_0}U_{\Omega_0}\sigma_j\Gamma$.
\item $\sigma_j\Gamma\sigma_j^{-1}\cap U$ is a cocompact lattice in $U$ for any $1\leq j\leq n$.
\item $U_{\Omega_0}$ contains a fundamental domain of $U/(U\cap\sigma_j\Gamma\sigma_j^{-1})$.
\end{enumerate}
\end{theorem}

In this paper, sometimes we write the homogeneous space $G/\Gamma$ as the space of right $\Gamma$-cosets $\Gamma\backslash G$. The correspondence between $G/\Gamma$ and $\Gamma\backslash G$ is the following: If $p=g\Gamma$, then $g\Gamma$ maps to $\Gamma g^{-1}=(g\Gamma)^{-1}$ and we denote by $p^{-1}=\Gamma g^{-1}$.

\section{Auxilliary lemmas}
Now we discuss a bit about the group action of $G$ on the upper half plane $\mathcal H^2$. It is known that $G$ acts on the upper half plane $\mathcal H^2$ by
\begin{align*}
\left(\begin{array}{cc} a & b\\ c & d\end{array}\right)\cdot z:=\frac{az+b}{cz+d}
\end{align*}
and we denote by $\pi$ the projection map from $\Gamma\backslash G$ (the space of right $\Gamma$-cosets) to $\Gamma\backslash\mathcal H^2$ by $\pi(\Gamma g)=\Gamma g\cdot i$. We define the geodesic flow on $\Gamma\backslash G$ by
\begin{align*}
g_t(\Gamma g)=\Gamma g\left(\begin{array}{cc} e^{t/2} & 0\\ 0 & e^{-t/2}\end{array}\right)=\Gamma g\cdot a_{-t}
\end{align*}
Fix a point $p_0\in\Gamma\backslash\mathcal H^2$. For any $x\in\Gamma\backslash G$, define $$\dist(x)=d_{\Gamma\backslash\mathcal H^2}(p_0,\pi(x))$$ where $d_{\Gamma\backslash\mathcal H^2}(\cdot,\cdot)$ is the hyperbolic distance on $\Gamma\backslash\mathcal H^2$. The following lemma describes the relation between $\dist$ and $\eta$.

\begin{lemma}\label{auxilliary}
For any $p\in G/\Gamma$, we have $$e^{\dist(p^{-1})}\sim\frac1{\eta(p)}.$$ The implicit constant depends only on $G/\Gamma$.
\end{lemma}
\begin{proof}
By Theorem~\ref{domains}, we can write $$G/\Gamma=\bigcup_{j=1}^nKA_{\alpha_0}U_{\Omega_0}\sigma_j\Gamma/\Gamma.$$ Let $p=kan\sigma_j\Gamma$ for some $k\in K=\operatorname{PSO}(2)$, $a=\left(\begin{array}{cc} e^{s/2} & 0\\ 0 & e^{-s/2}\end{array}\right)\in A_{\alpha_0}$ and $n\in U_{\Omega_0}$. Then we can compute that $$\eta(p)\sim e^s$$ where the implicit constant depends only on $K$, $\Gamma$, $\sigma_j$ and $\alpha_0$. On the other hand, we know that $$p^{-1}=\Gamma\sigma_j^{-1}n^{-1}a^{-1}k^{-1},\quad\pi(p^{-1})=\Gamma\sigma_j^{-1}n^{-1}a^{-1}\cdot i$$ and we can compute that $$e^{\dist(p^{-1})}\sim e^{-s}$$ where the implicit constant depends only on $\alpha_0$, $\Omega_0$, $\Gamma$ and $\sigma_j$. Then it follows that $$e^{\dist(p^{-1})}\sim\frac1{\eta(p)}.$$ This completes the proof of the lemma.
\end{proof}

\begin{lemma}\label{l31}
Let $p\in G/\Gamma$. If $p$ is Diophantine of type $\kappa$ $(0<\kappa<1)$, then there exists a constant $C>0$ depending only on $G/\Gamma$ and $p$ such that for any $t>0$ we have $$e^{\dist(g_t(p^{-1}))}\leq Ce^{\kappa t}.$$
\end{lemma}
\begin{proof}
Note that by definition we have $$g_t(p^{-1})=p^{-1}\cdot a_{-t}=(a_t\cdot p)^{-1}.$$ The lemma then follows from Lemma~\ref{auxilliary}.
\end{proof}
\begin{remark}\label{r31}
By combining Lemma~\ref{l31} and \cite[Lemma~3.4]{Z}, we can also conclude that $p^{-1}$ is Diophantine of type $((1+\kappa)/(1-\kappa),(1+\kappa)/(1-\kappa),\dots,(1+\kappa)/(1-\kappa))$ in the sense of \cite[Definition 1.1]{Z}. We will use this fact in the proof of Theorem~\ref{th11}.
\end{remark}

\section{Proof of Theorem~\ref{th11}}\label{sparse}
In this section, we prove Theorem~\ref{th11}. We know that any one-parameter unipotent subgroup is conjugate to $\{u_0(s):s\in\mathbb R\}$ or $\{u_0(-s):s\in\mathbb R\}$. Let $$a_0=\left(\begin{array}{cc} 1 & 0\\ 0 & -1\end{array}\right)$$ and define $\phi:G\to G$ by $$\phi(g)=a_0ga_0^{-1}.$$  Then $\phi$ is an automorphism of $G$, maps $u_0(s)$ to $u_0(-s)$ and defines a homeomorphism $\bar\phi$ between $G/\Gamma$ and $G/\phi(\Gamma)$. Moreover, for any $p\in G/\Gamma$, the subset $\{u_0(n^{1+\gamma})p:n\in\mathbb N\}$ is dense in $G/\Gamma$ if and only if $$\{u_0(-n^{1+\gamma})\phi(p):n\in\mathbb N\}=\bar\phi(\{u_0(n^{1+\gamma})p:n\in\mathbb N\})$$ is dense in $G/\phi(\Gamma)$. Therefore, without loss of generality, we may assume that the one-parameter unipotent subgroup is equal to $\{u_0(-s):s\in\mathbb R\}$. 

Now suppose that the initial point $p\in G/\Gamma$ is Diophantine of type $\kappa=1-1/{100}$. Then by Remark~\ref{r31} and \cite[Theorem 1.1]{Z}, there exists a constant $\gamma_0>0$ such that for any $0<\gamma<\gamma_0$ the following set $$\{p^{-1}\cdot u_0(n^{1+\gamma}):n\in\mathbb N\}$$ is dense in $\Gamma\backslash G$. It follows that $\{u_0(-n^{1+\gamma})\cdot p:n\in\mathbb N\}$ is dense in $G/\Gamma$. Therefore, we only need to consider the case when $p$ is not Diophantine of type $1-1/{100}$.

By Bruhat decomposition, we know that $$G=UAU_-\cup\omega AU_-\textup{ and }G/\Gamma=UAU_-/\Gamma\cup\omega AU_-/\Gamma.$$ As $e\Gamma$ is a $U_-$-periodic point and any point in $\omega AU_-/\Gamma$ is a $U$-periodic point, to prove Theorem~\ref{th11}, we may assume that $p\in UAU_-/\Gamma$. Now write $p=uan\Gamma$ for $u\in U$, $a\in A$ and $n\in U_-$. Since $a$ normalizes $U$, we have $$u_0(t)\cdot p=u_0(t)\cdot uan\Gamma=(ua)\cdot u_0(ct)\cdot n\Gamma\quad(\forall t\in\mathbb R)$$ for some constant $c>0$. Besides, $p$ is Diophantine of type $\kappa=1-1/100$ if and only if $n\Gamma$ is Diophantine of type $\kappa=1-1/100$. Hence to prove Theorem~\ref{th11}, it suffices to prove the following

\begin{theorem}\label{th21}
Let $\Gamma$ be a non-uniform lattice in $\PSL(2,\mathbb R)$. Then there exists a constant $\gamma_0>0$ such that for any $p\in U_-(e\Gamma)$ which is neither $U$-periodic nor Diophantine of type $\kappa=1-1/100$, any $0<\gamma<\gamma_0$ and any $c>0$, the orbit $\{u_0(-c n^{1+\gamma})p:n\in\mathbb N\}$ is dense in $\PSL(2,\mathbb R)/\Gamma$.
\end{theorem}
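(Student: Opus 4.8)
The plan is to dichotomize on the Diophantine behavior of the initial point $p\in U_-(e\Gamma)$. If $p$ is Diophantine of type $\kappa$ for some $\kappa>0$ with $\kappa$ small enough (depending on $\gamma_0$), then the orbit $\{u_0(t)p\}$ is quantitatively generic and the denseness—in fact equidistribution—of $\{u_0(cn^{1+\gamma})p:n\in\mathbb N\}$ for small $\gamma$ follows directly from the effective Dani–Smillie type equidistribution result of \cite{Z}; this case requires essentially no new work once $\gamma_0$ is chosen so that the exponent $1+\gamma$ lies inside the admissible window of \cite{Z} for the Diophantine class under consideration. So the heart of the matter is the complementary case: $p$ is \emph{not} Diophantine of type $\kappa$ for the relevant $\kappa$.

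In that case I would invoke Proposition~\ref{p22}: since $p\in U_-(e\Gamma)$ is not Diophantine of type $\kappa$, there is a sequence of distinct $U$-periodic points $q_k\in U_-(e\Gamma)$ with periods $d(q_k)\to\infty$ and $d_{G/\Gamma}(p,q_k)\leq C\,d(q_k)^{-1/(1-\kappa)}$. The strategy is then a two-scale approximation. First, the circle $\{u_0(t)q_k:t\in\mathbb R\}$ has period $d(q_k)\to\infty$, and by the standard equidistribution of long closed horocycles (Sarnak, or elementary Fourier/Eisenstein-series estimates) these closed horocycles become equidistributed, hence dense, in $\PSL(2,\mathbb R)/\Gamma$ as $k\to\infty$, with an explicit rate. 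Second, on each fixed circle $\{u_0(t)q_k\}$—which after rescaling $t\mapsto t/d(q_k)$ is just the standard circle $\mathbb R/\mathbb Z$—I need the sampled set $\{cn^{1+\gamma}\bmod d(q_k):n\in\mathbb N\}$ to be dense, in fact $\varepsilon_k$-dense, in that circle. This is a Weyl-type equidistribution statement for the sequence $cn^{1+\gamma}$ modulo $d(q_k)$: via Fourier analysis / van der Corput's inequality one bounds the exponential sums $\sum_{n\leq M}e(m\,cn^{1+\gamma}/d(q_k))$, and for $\gamma$ small (so that the sequence is "slowly varying" relative to the modulus) one gets that a window of $n$ of length polynomial in $d(q_k)$ already fills the circle to within $\varepsilon_k\to 0$. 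Combining: given any target point $y\in G/\Gamma$ and any $\delta>0$, choose $k$ large so that the closed horocycle through $q_k$ passes within $\delta/3$ of $y$ and within $\delta/3$ of $p$ at some parameters, then use the density of $\{cn^{1+\gamma}\bmod d(q_k)\}$ on that circle to find $n$ with $u_0(cn^{1+\gamma})q_k$ within $\delta/3$ of $y$; finally transport back from $q_k$ to $p$ using $d_{G/\Gamma}(p,q_k)$ small together with the (Lipschitz-in-a-neighborhood, polynomially-in-$t$-growing) control on how the flow $u_0(\cdot)$ distorts distances, which is exactly where the quantitative bound $d_{G/\Gamma}(p,q_k)\leq C d(q_k)^{-1/(1-\kappa)}$ must beat the distortion incurred over time $cn^{1+\gamma}$.

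The main obstacle I anticipate is this last balancing act: the time parameter $t=cn^{1+\gamma}$ needed to make $\{u_0(t)q_k\}$ come $\varepsilon_k$-close to the target grows with $k$ (polynomially in $d(q_k)$, from the Fourier estimate), and the horocycle flow expands distances between $p$ and $q_k$ at a rate that is polynomial in $t$ in the "thick" part of the space but can be much worse near the cusp. One therefore has to run the argument so that the relevant orbit segments stay in a fixed compact set for most of the time—using the fact that an equidistributing closed horocycle spends a definite proportion of time in any fixed compact set, and the nondivergence of unipotent orbits (Dani–Margulis) to control excursions—and then check that $d(q_k)^{-1/(1-\kappa)}$ times the polynomial-in-$t$ distortion still tends to $0$. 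This forces the quantitative relationship among $\gamma_0$, $\kappa$, the van der Corput exponent, and the horocycle equidistribution rate; getting all four to be mutually compatible (with $\gamma_0>0$) is the crux, and is precisely the point where the input from \cite{CZ,Z} (the shape of the non-Diophantine approximation) is used in an essential, quantitative way rather than just qualitatively.
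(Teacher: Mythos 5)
Your proposal follows essentially the same route as the paper: the same Diophantine/non-Diophantine dichotomy with the Diophantine case delegated to \cite{Z}, Proposition~\ref{p22} to produce the approximating periodic points, a van der Corput exponential-sum estimate to equidistribute $cn^{1+\gamma}$ on the closed horocycle, equidistribution of long closed horocycles, and the same final balancing of exponents (the paper takes $\kappa=1-1/100$, $N_k=d(q_k)^{10}$, $\gamma_0=0.1$). The one obstacle you flag---cusp excursions worsening the distortion between the $u_0$-orbits of $p$ and $q_k$---does not actually arise in the paper's implementation, since the distortion is bounded by the explicit conjugation identity of Lemma~\ref{l23}, which controls the distance in $G$ itself by $\max\{s^2,s,1\}\,|x-y|$ uniformly, so no nondivergence input is needed.
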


The rest of this section is devoted to the proof of Theorem~\ref{th21}. The following proposition is well-known.

\begin{proposition}{\cite[Corollary 8.12]{IK}}\label{p21}
Let $f$ be a smooth function on $[a,b]$ with $b-a\geq1$ such that $f''(x)\geq\delta>0$. Then $$\left|\sum_{n=a}^be^{2\pi if(n)}\right|\ll\frac{f'(b)-f'(a)+1}{\sqrt{\delta}}$$ where the implicit constant is absolute.
\end{proposition}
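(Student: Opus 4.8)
\textbf{Plan of proof for Proposition~\ref{p21}.} This is the classical van der Corput / second-derivative estimate for exponential sums, and the plan is to follow the standard two-step argument: first treat the sum as an integral plus an error via truncated Poisson summation (or the Euler--Maclaurin/Kusmin--Landau route), then bound each resulting integral by the stationary-phase/first-derivative principle.

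First I would reduce to a clean setup. By replacing $[a,b]$ with the integer interval $[\lceil a\rceil,\lfloor b\rfloor]$ (which changes the sum and the quantities $f'(a),f'(b)$ only by $O(1)$, absorbed into the ``$+1$''), we may assume $a,b\in\mathbb Z$. Since $f''\geq\delta>0$, the derivative $f'$ is strictly increasing on $[a,b]$; write $\alpha=f'(a)$, $\beta=f'(b)$, so $\beta-\alpha=\int_a^b f''\geq\delta(b-a)\geq 0$. The target bound is $\ll(\beta-\alpha+1)/\sqrt\delta$.

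Next I would apply truncated Poisson summation: for a suitable smooth cutoff one has
\begin{equation*}
\sum_{a\le n\le b} e^{2\pi i f(n)} = \sum_{\nu\in\mathbb Z} \int_a^b e^{2\pi i(f(x)-\nu x)}\,dx + O(1),
\end{equation*}
and, more usefully, after splitting the frequency range, the only $\nu$ that can contribute a large (non-oscillatory) term are those with $\nu\in[\alpha,\beta]$, i.e. those for which $f'(x)=\nu$ has a solution in $[a,b]$; there are at most $\beta-\alpha+1$ such integers $\nu$. For each of these I would use the classical stationary-phase bound
\begin{equation*}
\left|\int_a^b e^{2\pi i(f(x)-\nu x)}\,dx\right| \ll \frac{1}{\sqrt{\min_x|f''(x)|}} \le \frac{1}{\sqrt\delta},
\end{equation*}
which is exactly the second-derivative test for a single integral (proved by integration by parts away from the stationary point $x_\nu$ where $f'(x_\nu)=\nu$, and by the trivial bound $\ll\delta^{-1/2}$ on an interval of length $\ll\delta^{-1/2}$ around $x_\nu$). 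Summing over the $\le\beta-\alpha+1$ relevant frequencies gives the contribution $\ll(\beta-\alpha+1)/\sqrt\delta$. For the remaining frequencies $\nu\notin[\alpha-1,\beta+1]$ (say), $f'(x)-\nu$ is bounded away from $0$ with a useful lower bound $\gtrsim|\nu-\alpha|$ or $\gtrsim|\nu-\beta|$ throughout $[a,b]$, so repeated integration by parts shows $\int_a^b e^{2\pi i(f(x)-\nu x)}dx\ll 1/|\nu-\alpha|^{2}$ (or similar), and these sum to $O(1)$; this $O(1)$ is absorbed. Alternatively, and perhaps more cleanly for exposition, I would bypass Poisson summation and instead run the Kusmin--Landau inequality on dyadic-type blocks where $\|f'\|$ stays away from integers, combined with the trivial bound on the $O(\sqrt{(\beta-\alpha)/\delta}+1)$ blocks where it does not — but I expect the Poisson route to be shorter to write.

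\textbf{Main obstacle.} The genuinely delicate point is the bookkeeping in the Poisson-summation step: justifying the truncation of the $\nu$-sum and controlling the tail frequencies so that their total contribution is $O(1)$ rather than something growing, and making sure the stationary point $x_\nu$ is handled uniformly even when it lies near an endpoint $a$ or $b$ (where ``integration by parts away from the stationary point'' has less room). All of this is standard — it is, e.g., in Iwaniec--Kowalski \cite{IK} — so the cleanest path is simply to cite that reference for the single-integral second-derivative test and for the truncated Poisson formula, and then assemble the pieces as above; I would not reprove those classical lemmas from scratch.
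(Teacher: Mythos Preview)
The paper gives no proof of this proposition at all: it is stated with a bare citation to Iwaniec--Kowalski \cite{IK} and used as a black box. Your outline of the classical van der Corput second-derivative argument (truncated Poisson summation plus the stationary-phase bound on each frequency, or equivalently Kusmin--Landau on blocks) is the standard route and is correct in substance, and indeed your concluding recommendation---simply to cite \cite{IK}---is exactly what the paper does.
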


As a corollary of Proposition~\ref{p21}, we deduce the following
\begin{lemma}\label{l21}
Let $c>0$ and $0<\gamma<1$. Let $f(x)=cx^{1+\gamma}$. For each $k\in\mathbb R\setminus\{0\}$, we have $$\left|\sum_{n=1}^Ne^{2\pi ikf(n)}\right|\ll |k|^{\frac12}N^{(1+\gamma)/2}+|k|^{-\frac12}N^{(1-\gamma)/2}$$ where the implicit constant depends only on $c$ and $\gamma$.
\end{lemma}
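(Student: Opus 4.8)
The plan is to deduce Lemma~\ref{l21} directly from Proposition~\ref{p21} by splitting the range of summation dyadically and estimating the exponential sum on each dyadic block, where the second derivative of $kf$ is essentially constant. Write $g(x)=kf(x)=kcx^{1+\gamma}$, so that $g'(x)=kc(1+\gamma)x^{\gamma}$ and $g''(x)=kc(1+\gamma)\gamma x^{\gamma-1}$. On a block $x\in[M,2M]$ we have $|g''(x)|\sim |k|M^{\gamma-1}$ (with implied constants depending only on $c,\gamma$), and $g'(2M)-g'(M)\sim |k|M^{\gamma}$. Since the sign of $g''$ is constant (either $g''>0$ throughout, or $-g''>0$ throughout — and Proposition~\ref{p21} applies equally to $\overline{e^{2\pi i g}}=e^{2\pi i(-g)}$), Proposition~\ref{p21} gives
\begin{equation*}
\left|\sum_{M\le n\le 2M}e^{2\pi i g(n)}\right|\ll\frac{|k|M^{\gamma}+1}{\sqrt{|k|M^{\gamma-1}}}\ll |k|^{1/2}M^{(1+\gamma)/2}+|k|^{-1/2}M^{(1-\gamma)/2},
\end{equation*}
where in the last step I bound $|k|M^{\gamma}+1\le |k|M^{\gamma}+1$ and distribute the two terms over the denominator (the $+1$ produces the second term, using $M^{\gamma}\ge$ the relevant powers on a block, or more crudely $1/\sqrt{|k|M^{\gamma-1}}=|k|^{-1/2}M^{(1-\gamma)/2}$ directly).

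Next I would sum over the $O(\log N)$ dyadic blocks $M=N/2,N/4,\dots$ covering $\{1,\dots,N\}$ (together with an $O(1)$ contribution from the smallest scales, which is harmless). Both exponents $(1+\gamma)/2>0$ and $(1-\gamma)/2>0$ are positive, so the dyadic sums $\sum_M M^{(1+\gamma)/2}$ and $\sum_M M^{(1-\gamma)/2}$ over $M\le N$ are dominated by their largest term up to a constant factor — no logarithmic loss. This yields
\begin{equation*}
\left|\sum_{n=1}^{N}e^{2\pi i kf(n)}\right|\ll |k|^{1/2}N^{(1+\gamma)/2}+|k|^{-1/2}N^{(1-\gamma)/2},
\end{equation*}
which is exactly the claimed bound.

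The only genuine point requiring care is the application of Proposition~\ref{p21} on each block: the proposition as stated requires $f''\ge\delta>0$, so for $k<0$ (equivalently when $g''<0$) I must first replace $e^{2\pi i g(n)}$ by its complex conjugate $e^{2\pi i(-g(n))}$, whose phase $-g$ then has positive second derivative $-g''\sim|k|M^{\gamma-1}>0$; since $|\sum e^{2\pi i(-g(n))}|=|\sum e^{2\pi i g(n)}|$, the estimate transfers. One should also note that on the very first block ($M\sim 1$) the quantity $M^{\gamma-1}$ is bounded below by a constant depending on $\gamma$, so there is no degeneration of $\delta$; all implied constants depend only on $c$ and $\gamma$, as is consistent with the way the lemma is used. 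I do not anticipate any real obstacle here — this is a standard van der Corput / second-derivative-test argument, and the dyadic decomposition is the natural device to reduce the global sum to the regime where Proposition~\ref{p21} directly applies.
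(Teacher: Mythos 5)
Your proof is correct and uses the same key input as the paper, namely the second-derivative test of Proposition~\ref{p21} (applied to $-g$ when $k<0$), so the approaches are essentially identical. The only difference is that the paper skips the dyadic decomposition and applies the proposition once on the whole range $[1,N]$ with $\delta=|ck|(1+\gamma)\gamma N^{\gamma-1}$, the minimum of $|(kf)''|$ on $[1,N]$ in the regime $\gamma<1$ that is actually used, which immediately yields the claimed bound; your block-by-block version reaches the same estimate after summing the geometric series.
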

\begin{proof}
Without loss of generality, we may assume that $k>0$. (Otherwise we may apply complex conjugation.) Let $a=1$ and $b=N$ in Proposition~\ref{p21}, and we take $$\delta=|ck|(1+\gamma)\gamma N^{\gamma-1}.$$ Then we have $$(kf(x))''\geq\delta>0$$ and by Proposition~\ref{p21}
\begin{align*}
\left|\sum_{n=1}^N e^{2\pi ikf(n)}\right|\ll&\frac{|ck|(1+\gamma)N^\gamma-|ck|(1+\gamma)+1}{\sqrt{|ck|(1+\gamma)\gamma N^{\gamma-1}}}\\
\ll&|k|^{\frac12}N^{(1+\gamma)/2}+|k|^{-\frac12}N^{(1-\gamma)/2}.
\end{align*}
This completes the proof of the lemma.
\end{proof}

\begin{lemma}\label{l22}
Let $c>0$ and $0<\gamma<1$. Let $f(x)$ be a smooth periodic function on $\mathbb R$ with period $l>1$. Then $$\left|\sum_{n=1}^Nf(-cn^{1+\gamma})-N\cdot\frac1l\int_0^lf(x)dx\right|\ll l^3 N^{(1+\gamma)/2}\|f\|_{\infty,2}.$$ Here the implicit constant depends only on $c$ and $\gamma$.
\end{lemma}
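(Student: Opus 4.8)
The plan is to expand $f$ in its Fourier series and apply Lemma~\ref{l21} term by term. Write
\[
f(x)=\sum_{k\in\mathbb Z}\hat f(k)e^{2\pi ikx/l},\qquad \hat f(k)=\frac1l\int_0^l f(x)e^{-2\pi ikx/l}\,dx,
\]
so that the $k=0$ term is exactly $\hat f(0)=\frac1l\int_0^l f(x)\,dx$, which accounts for the main term $N\hat f(0)$. Summing over $n=1,\dots,N$ and subtracting the main term leaves
\[
\sum_{n=1}^N f(cn^{1+\gamma}) - N\hat f(0) = \sum_{k\neq 0}\hat f(k)\sum_{n=1}^N e^{2\pi i(k/l)\,cn^{1+\gamma}}.
\]
Applying Lemma~\ref{l21} with the frequency $k/l$ in place of $k$ bounds the inner sum by $\ll |k/l|^{1/2}N^{(1+\gamma)/2}+|k/l|^{-1/2}N^{(1-\gamma)/2}$, and since $\gamma>0$ the first term dominates for each fixed $k$, so the inner sum is $\ll (|k|/l)^{1/2}N^{(1+\gamma)/2}$ (absorbing the second term into the first up to a constant). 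Thus the whole error is $\ll l^{-1/2}N^{(1+\gamma)/2}\sum_{k\neq 0}|\hat f(k)|\,|k|^{1/2}$.

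The remaining point is to control $\sum_{k\neq 0}|\hat f(k)|\,|k|^{1/2}$ by $l^{7/2}\|f\|_{2,\infty}$ (so that combined with the $l^{-1/2}$ prefactor we recover the claimed $l^3$). By Cauchy--Schwarz,
\[
\sum_{k\neq 0}|\hat f(k)|\,|k|^{1/2} \le \Big(\sum_{k\neq 0}|k|^{-3}\Big)^{1/2}\Big(\sum_{k\neq 0}|k|^{4}|\hat f(k)|^2\Big)^{1/2}\ll \Big(\sum_{k\neq 0}|k|^{4}|\hat f(k)|^2\Big)^{1/2}.
\]
Now integration by parts gives $\hat f(k) = \big(\tfrac{l}{2\pi i k}\big)^2\,\widehat{f''}(k)$ (using periodicity of $f$ so boundary terms vanish), hence $|k|^2|\hat f(k)|\ll l^2|\widehat{f''}(k)|$, and therefore $\sum_{k\neq 0}|k|^4|\hat f(k)|^2\ll l^4\sum_k|\widehat{f''}(k)|^2 = l^4\cdot\frac1l\int_0^l|f''|^2\ll l^3\|f\|_{2,\infty}^2$ by Parseval on the interval of length $l$. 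Taking square roots, $\sum_{k\neq 0}|\hat f(k)||k|^{1/2}\ll l^{3/2}\|f\|_{2,\infty}$, and multiplying by the earlier $l^{-1/2}N^{(1+\gamma)/2}$ yields the bound $\ll l\,N^{(1+\gamma)/2}\|f\|_{2,\infty}$ — in fact slightly stronger than stated, so the claimed $l^3$ is comfortably enough (one can be sloppier with the Fourier bookkeeping and still land inside $l^3$).

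The main obstacle is purely bookkeeping: making sure the powers of $l$ track correctly through the rescaling of the frequency (the inner exponential sum has frequency $k/l$, not $k$), through integration by parts (each derivative pulls out a factor $l/k$), and through Parseval on an interval of length $l$ (which contributes a $1/l$ versus $l$ depending on normalization). There is no analytic difficulty beyond Lemma~\ref{l21}; one just needs the chain $\hat f(k)\leftrightarrow\widehat{f''}(k)\leftrightarrow\|f''\|_{L^2[0,l]}\leftrightarrow\|f\|_{2,\infty}$ and a convergent tail $\sum|k|^{-3}$ to run Cauchy--Schwarz. I would state the Fourier conventions explicitly at the start to avoid sign and scaling slips, then the estimate falls out in three lines.
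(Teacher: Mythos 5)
Your proof is correct and follows essentially the same route as the paper's: expand $f$ in its Fourier series on $[0,l]$, apply Lemma~\ref{l21} to each nonzero frequency $k/l$, and use two integrations by parts to get summable decay of the coefficients (the paper simply sums the pointwise bound $|a_k|\ll l^2k^{-2}\|f\|_{2,\infty}$ against $|k|^{1/2}$, rather than going through Cauchy--Schwarz and Parseval, but that is a cosmetic difference). One minor caveat: your claim that the first term of Lemma~\ref{l21} dominates the second ``for each fixed $k$'' is not uniform in $l$ and $N$ (it fails when $l>|k|N^{\gamma}$), but this is harmless since the second term contributes at most $\ll l^{5/2}N^{(1-\gamma)/2}\|f\|_{2,\infty}$ after summation, which is well within the stated bound.
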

\begin{proof}
Write the Fourier series of $f$ on $[0,l]$ as $$f(x)=\sum_{k\in\mathbb Z} a_ke^{2\pi ikx/l}$$ where $a_k=\frac1l\int_0^l f(x)e^{-2\pi ikx/l}dx.$ Since $f\in C^\infty(\mathbb R)$, using integration by parts, we have $$|a_k|\leq\frac{l^2}{4\pi^2k^2}\|f\|_{\infty,2}.$$
Therefore, by Lemma~\ref{l21}, we have
\begin{align*}
&\left|\sum_{n=1}^N f(-cn^{1+\gamma})-N\cdot\frac1l\int_0^lf(x)dx\right|\\
\leq& \sum_{k\neq0} |a_k|\left|\sum_{n=1}^Ne^{-2\pi ik cn^{1+\gamma}/l}\right|\\
\ll&\sum_{k\neq 0}|a_k|N^{(1+\gamma)/2}|k|^{1/2}l^{1/2}\\
\ll& l^3N^{(1+\gamma)/2}\|f\|_{\infty,2}.
\end{align*}
This completes the proof of the lemma.
\end{proof}

\begin{lemma}\label{l23}
Let $c>0$ and $0<\gamma<1$. Let $f\in C_c^\infty(\PSL(2,\mathbb R)/\Gamma)$ and $p,q\in U_-(e\Gamma)$ with $$p=\left(\begin{array}{cc}1 & 0 \\x & 1\end{array}\right)\Gamma\textup{ and }q=\left(\begin{array}{cc}1 & 0 \\y & 1\end{array}\right)\Gamma.$$ Then
\begin{align*}
\left|\sum_{n=1}^Nf\left(u_0(-cn^{1+\gamma})p\right)-\sum_{n=1}^Nf\left(u_0(-cn^{1+\gamma})q\right)\right|\ll N^{3+2\gamma}|x-y|\|f\|_{\infty,1}.
\end{align*}
Here the implicit constant depends only on $c$ and $\gamma$.
\end{lemma}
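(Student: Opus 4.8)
The plan is to compare the two orbits point by point, by conjugating one lower--triangular unipotent translate into the other. Write $n_x=\left(\begin{smallmatrix}1&0\\x&1\end{smallmatrix}\right)$ and $n_y=\left(\begin{smallmatrix}1&0\\y&1\end{smallmatrix}\right)$, so that $p=n_x\Gamma$ and $q=n_y\Gamma$, and set $\varepsilon=x-y$. Since $n_\varepsilon n_y=n_x$, for every $t\in\mathbb R$ we have $u_0(t)p=g_t\cdot\bigl(u_0(t)q\bigr)$ in $G/\Gamma$, where $g_t=u_0(t)\,n_\varepsilon\,u_0(-t)$. A direct matrix computation gives
\[
g_t=\begin{pmatrix}1+t\varepsilon & -t^2\varepsilon\\ \varepsilon & 1-t\varepsilon\end{pmatrix},
\]
so $g_t=e+O\bigl((1+|t|+t^2)|\varepsilon|\bigr)$ entrywise; in particular, once $t\ge 1$ and $t^2|\varepsilon|\le 1$ the element $g_t$ lies in a fixed bounded neighbourhood of $e$ and $d_G(g_t,e)\ll t^2|\varepsilon|$.

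Next I would invoke the standard infinitesimal (Lipschitz-type) estimate: if $g=\exp X$ with $X\in\mathfrak g$ small, then for every $z\in G/\Gamma$ one has $|f(gz)-f(z)|=\bigl|\int_0^1(Xf)(\exp(sX)z)\,ds\bigr|\ll\|X\|_{\mathfrak g}\,\|f\|_{1,\infty}\ll d_G(g,e)\,\|f\|_{1,\infty}$. Applying this with $g=g_t$ and $z=u_0(t)q$ yields, for $t\ge 1$ and $t^2|\varepsilon|\le 1$,
\[
\bigl|f(u_0(t)p)-f(u_0(t)q)\bigr|\ll t^2|x-y|\,\|f\|_{1,\infty}.
\]
When instead $t^2|\varepsilon|\ge 1$, I would simply bound the left-hand side by $2\|f\|_{\infty,0}\le 2\|f\|_{1,\infty}\le 2t^2|x-y|\,\|f\|_{1,\infty}$, so the same inequality holds in all cases with $t\ge 1$; the finitely many indices $n$ with $cn^{1+\gamma}<1$ are disposed of by the trivial $L^\infty$ bound and absorbed into a constant depending only on $c$ and $\gamma$.

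Finally I would sum over $n=1,\dots,N$ with $t=cn^{1+\gamma}\ge 1$ (for $n$ large), using $\sum_{n\le N}n^{2+2\gamma}\ll N^{3+2\gamma}$:
\[
\Bigl|\sum_{n=1}^N f\bigl(u_0(cn^{1+\gamma})p\bigr)-\sum_{n=1}^N f\bigl(u_0(cn^{1+\gamma})q\bigr)\Bigr|\ll |x-y|\,\|f\|_{1,\infty}\sum_{n=1}^N c^2 n^{2+2\gamma}\ll_{c,\gamma} N^{3+2\gamma}|x-y|\,\|f\|_{1,\infty},
\]
which is the desired estimate. The only mildly delicate point is bookkeeping: deciding for which $n$ the conjugating element $g_t$ is close enough to $e$ for the infinitesimal estimate to be legitimate, versus when one must fall back on the trivial supremum bound. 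Since both regimes contribute the same order to the final sum, this causes no genuine difficulty, and the real content of the lemma is just the explicit conjugacy identity $u_0(t)\,n_\varepsilon\,u_0(-t)$ and the $t^2$ growth of its entries.
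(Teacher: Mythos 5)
Your proof is correct and follows essentially the same route as the paper: the same conjugation identity $u_0(t)\,n_{\varepsilon}\,u_0(-t)$, the resulting bound $d_G\bigl(u_0(t)n_y,u_0(t)n_x\bigr)\ll\max\{t^2,t,1\}\,|x-y|$, and summation over $n$ using $\sum_{n\le N}n^{2+2\gamma}\ll N^{3+2\gamma}$. The paper compresses the Lipschitz estimate and the case analysis into ``direct computations''; your explicit handling of the regime $t^2|\varepsilon|\ge 1$ via the trivial supremum bound is sound bookkeeping that the paper leaves implicit.
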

\begin{proof}
Note that 
\begin{align*}
\left(\begin{array}{cc}1 & s \\0 & 1\end{array}\right)\left(\begin{array}{cc}1 & 0 \\\delta & 1\end{array}\right)\left(\begin{array}{cc}1 & -s \\0 & 1\end{array}\right)=\left(\begin{array}{cc}1+s\delta & -s^2\delta \\\delta & 1-\delta s\end{array}\right)
\end{align*}
and
\begin{align*}
u_0(s)q=&\left(\begin{array}{cc}1 & s \\0 & 1\end{array}\right)\left(\begin{array}{cc}1 & 0 \\y-x & 1\end{array}\right)\left(\begin{array}{cc}1 & -s \\0 & 1\end{array}\right)u_0(s)p\\
=&\left(\begin{array}{cc}1+s(y-x) & -s^2(y-x) \\ y-x & 1-(y-x)s\end{array}\right)u_0(s)p.
\end{align*}
Hence $$d_{G/\Gamma}\left(u_0(s)q,u_0(s)p\right)\leq \max\{s^2|x-y|,|s(x-y)|,|x-y|\}.$$ If $s=-cn^{1+\gamma}$ $(n\in\mathbb N)$, then $$d_{G/\Gamma}\left(u_0(s)q,u_0(s)p\right)\ll n^{2+2\gamma}|x-y|$$ where the implicit constant depends only on $c$.
The lemma then follows from mean value theorem.
\end{proof}

By combining Lemma~\ref{l22} and Lemma~\ref{l23}, we have the following

\begin{proposition}\label{p23}
Let $c>0$ and $0<\gamma<1$. Let $f\in C_c^\infty(\PSL(2,\mathbb R)/\Gamma)$ and $p\in U_-(e\Gamma)\subset\PSL(2,\mathbb R)/\Gamma$. Suppose that $p$ is neither a $U$-periodic point nor Diophantine of type $\kappa=1-1/100$. Let $\{q_k\}_{k\in\mathbb N}$ be a sequence of $U$-periodic points in $U_-(e\Gamma)$ which converges to $p$ as in Proposition~\ref{p22}. Then there is a constant $C>0$ such that
\begin{align*}
&\left|\frac1N\sum_{n=1}^Nf\left(u_0(-cn^{1+\gamma})p\right)-\frac1{d(q_k)}\int_0^{d(q_k)} f\left(u_0(s)q_k\right)ds\right|\\
\leq& C (N^{2+2\gamma}d_{U_-}(p,q_k)+d(q_k)^3N^{(\gamma-1)/2})\|f\|_{\infty,2}
\end{align*}
Here the constant $C$ depends only on $c$ and $\gamma$.
\end{proposition}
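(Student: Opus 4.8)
The plan is to combine the two approximation estimates that have just been developed, namely Lemma~\ref{l23} (comparing the sampled sums along two nearby lower-triangular orbits) and Lemma~\ref{l22} (comparing the sampled sum along a periodic orbit with the average over that periodic orbit), applied along the sequence $q_k$ furnished by Proposition~\ref{p22}. First I would write $p=\left(\begin{smallmatrix}1&0\\x&1\end{smallmatrix}\right)\Gamma$ and $q_k=\left(\begin{smallmatrix}1&0\\x_k&1\end{smallmatrix}\right)\Gamma$ with $|x-x_k|\sim d_{G/\Gamma}(p,q_k)$, so that Lemma~\ref{l23} gives
\begin{align*}
\left|\frac1N\sum_{n=1}^Nf(u_0(cn^{1+\gamma})p)-\frac1N\sum_{n=1}^Nf(u_0(cn^{1+\gamma})q_k)\right|\ll N^{2+2\gamma}d_{G/\Gamma}(p,q_k)\|f\|_{1,\infty}.
\end{align*}

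Next I would handle the second term. Since $q_k$ is $U$-periodic with period $d(q_k)$, the function $s\mapsto f(u_0(s)q_k)$ is a smooth periodic function of $s$ with period $l=d(q_k)$, and one checks that its Sobolev norm $\|f(u_0(\cdot)q_k)\|_{2,\infty}$ is controlled by $\|f\|_{2,\infty}$ with an implicit constant depending only on the finitely many Lie derivatives along $U$ (this is where one uses that $f$ is compactly supported and the orbit $u_0(\mathbb R)q_k$, being a closed orbit of fixed shape, meets $\operatorname{supp}f$ in a controlled way; more simply, differentiating $f(u_0(s)q_k)$ in $s$ just produces Lie derivatives of $f$). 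Applying Lemma~\ref{l22} with this periodic function and rescaling the frequency $c\mapsto c/l$ appropriately — or directly absorbing the constant $c$ — yields
\begin{align*}
\left|\frac1N\sum_{n=1}^Nf(u_0(cn^{1+\gamma})q_k)-\frac1{d(q_k)}\int_0^{d(q_k)}f(u_0(s)q_k)\,ds\right|\ll d(q_k)^3 N^{(\gamma-1)/2}\|f\|_{2,\infty}.
\end{align*}
Adding the two displayed bounds and using $\|f\|_{1,\infty}\leq\|f\|_{2,\infty}$ (up to a constant, on the fixed space $G/\Gamma$; or simply stating the result with $\|f\|_{2,\infty}$ throughout), together with $N^{2+2\gamma}\leq N^{2+2\gamma}$ being the dominant polynomial factor in the first term, gives the claimed inequality with an appropriate constant $C$.

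The routine points to be careful about are: first, that the constant $c$ and the period $l=d(q_k)$ interact in Lemma~\ref{l22} — I would either restate Lemma~\ref{l22} allowing an extra multiplicative frequency $c$ (which only changes the implicit constant, since $f(c\,\cdot)$ has the same decay of Fourier coefficients up to powers of $c$) or absorb $c$ into the parametrization of the unipotent, so this is harmless; second, the passage from $|x-x_k|$ to $d_{G/\Gamma}(p,q_k)$ requires knowing that both points lie in the fixed compact piece $U_-(e\Gamma)$, which is exactly the hypothesis, so the lower-triangular coordinate is bi-Lipschitz in the metric there. The one genuinely substantive point — though still short — is the Sobolev-norm comparison $\|f(u_0(\cdot)q_k)\|_{2,\infty}\ll\|f\|_{2,\infty}$ uniformly in $k$: I expect this to be the main thing to verify carefully, since a priori the periodic orbits $u_0(\mathbb R)q_k$ wander out toward the cusp as $k\to\infty$, but the key observation is that $\frac{d^j}{ds^j}f(u_0(s)q_k)=(Y^jf)(u_0(s)q_k)$ where $Y$ is the fixed left-invariant vector field generating $u_0$, so the $L^\infty$ (and $L^2$ over one period, after dividing by $l$) norms are bounded by $\sup|Y^jf|\leq\|f\|_{\infty,\infty}\leq\|f\|_{2,\infty}$ by Sobolev embedding on $G/\Gamma$, uniformly in $k$. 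Once this is in hand the proposition follows by the two-line estimate above, and this is also the estimate that will be fed into the denseness argument: as $k\to\infty$ the periodic orbits $u_0(\mathbb R)q_k$ equidistribute in $G/\Gamma$ (by Ratner, or here by the classical fact that their periods tend to infinity), so the right-hand average converges to $\int_{G/\Gamma}f\,d\mathrm{Haar}$, while the error terms can be killed by choosing $N$ large relative to $d(q_k)$ and then $k$ large relative to $N$, using the quantitative bound $d_{G/\Gamma}(p,q_k)\leq C d(q_k)^{-1/(1-\kappa)}$ from Proposition~\ref{p22} to balance the two error terms for suitably small $\gamma$.
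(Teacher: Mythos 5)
Your proposal is correct and follows essentially the same route as the paper: the paper's proof is exactly the two-step combination of Lemma~\ref{l23} (to pass from $p$ to $q_k$, giving the $N^{2+2\gamma}d(p,q_k)$ term) and Lemma~\ref{l22} applied to the periodic function $s\mapsto f(u_0(s)q_k)$ with period $l=d(q_k)$ (giving the $d(q_k)^3N^{(\gamma-1)/2}$ term), followed by the triangle inequality. Your additional care about the uniform Sobolev comparison $\|f(u_0(\cdot)q_k)\|_{2,\infty}\ll\|f\|_{2,\infty}$ via Lie derivatives along $U$ is a point the paper leaves implicit, and your verification is the right one.
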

\begin{proof}
By Lemma~\ref{l23}, we have
\begin{align*}
\left|\frac1N\sum_{n=1}^Nf(u_0(-cn^{1+\gamma})p)-\frac1N\sum_{n=1}^Nf(u_0(-cn^{1+\gamma})q_k)\right|\ll N^{2+2\gamma}d_{U_-}(p,q_k)\|f\|_{\infty,1}.
\end{align*}
Since $f(u_0(x)q_k)$ is a smooth periodic function on $\mathbb R$ with period $d(q_k)$, by Lemma~\ref{l22}, we have
\begin{align*}
\left|\frac1N\sum_{n=1}^Nf(u_0(-cn^{1+\gamma})q_k)-\frac1{d(q_k)}\int_0^{d(q_k)}f(u_0(s)q_k)ds\right|\ll d(q_k)^3N^{\frac{\gamma-1}2}\|f\|_{\infty,2}.
\end{align*}
The proposition then follows from these two inequalities.
\end{proof}

\begin{proof}[Proof of Theorem~\ref{th21}]
Let $p\in\PSL(2,\mathbb R)/\Gamma$ be a point in $U_-(e\Gamma)$ which is neither $U$-periodic nor Diophantine of type $\kappa=1-1/100$. Then by Proposition~\ref{p22}, there exists a sequence of $U$-periodic points $\{q_k\}_{k\in\mathbb N}$ such that $q_k\to p$, $d(q_k)\to\infty$ and $$d_{U_-}(p,q_k)\leq d(q_k)^{-50}.$$ Now according to Proposition~\ref{p23}, for any $f\in C_c^\infty(\PSL(2,\mathbb R)/\Gamma)$ we can compute that
\begin{align*}
&\left|\frac1N\sum_{n=1}^Nf\left(u_0(-cn^{1+\gamma})p\right)-\frac1{d(q_k)}\int_0^{d(q_k)} f\left(u_0(s)q_k\right)ds\right|\\
\leq&C(N^{2+2\gamma}d_{U_-}(p,q_k)+d(q_k)^3N^{(\gamma-1)/2})\|f\|_{\infty,2}
\end{align*}
where $C$ depends only on $c$ and $\gamma$. Let $\gamma_0=0.1$ and $N=N_k:=d(q_k)^{10}$. Then for any $0<\gamma<\gamma_0$, we have $$N^{2+2\gamma}d_{U_-}(p,q_k)\leq d(q_k)^{-28},\quad d(q_k)^3N^{(\gamma-1)/2}\leq d(q_k)^{-1.5}.$$
Therefore, as $k\to\infty$, we have $$\left|\frac1{N_k}\sum_{n=1}^{N_k}f\left(u_0(-cn^{1+\gamma})p\right)-\frac1{d(q_k)}\int_0^{d(q_k)} f\left(u_0(s)q_k\right)ds\right|\to0.$$ It is known \cite[Theorem 5]{R} that $$\frac1{d(q_k)}\int_0^{d(q_k)} f\left(u_0(s)q_k\right)ds\to\int_{G/\Gamma}fd\mu_{G/\Gamma}$$ where $\mu_{G/\Gamma}$ is the invariant probability measure on $G/\Gamma$. Hence $$\frac1{N_k}\sum_{n=1}^{N_k}f\left(u_0(-cn^{1+\gamma})p\right)\to\int_{G/\Gamma}fd\mu_{G/\Gamma}\quad(\forall f\in C_c^\infty(G/\Gamma)).$$ This implies that the orbit $\{u_0(-cn^{1+\gamma})p:n\in\mathbb N\}$ is dense in $\PSL(2,\mathbb R)/\Gamma$ for any $\gamma<0.1$.
\end{proof}

\section{Proof of Theorem~\ref{th12} for Diophantine points}\label{sieve}
In the following two sections, we prove Theorem~\ref{th12}. As discussed at the beginning of section~\ref{sparse}, without loss of generality, we may assume that $$\{u(t)\}_{t\in\mathbb R}=\{u_0(-t)\}_{t\in\mathbb R}$$ is the one-parameter unipotent subgroup in Theorem~\ref{th12}. For any $x\in\mathbb R$, we denote by $[x]$ the largest integer $\leq x$. 

The following is the Jurkat-Richert theorem about linear sieve.
\begin{theorem}{\cite[Theorem 9.7]{MN}}\label{th31}
Let $A=\{a(n)\}_{n\in\mathbb N}$ be a sequence of non-negative numbers such that $$|A|=\sum_{n=1}^\infty a(n)<\infty.$$ Let $\mathcal P$ be a set of prime numbers and for $z\geq2$, let $$P(z)=\prod_{p\in\mathcal P,p<z}p.$$ Let $$S(A,\mathcal P,z)=\sum_{n=1,(n,P(z))=1}^\infty a(n).$$ For every $n\geq1$, let $g_n(d)$ be a multiplicative function such that $$0\leq g_n(p)<1,\textup{ for all }p\in\mathcal P.$$ Define $r(d)$ by $$|A_d|=\sum_{n=1,d|n}^\infty a(n)=\sum_{n=1}^\infty a(n)g_n(d)+r(d).$$ Let $\mathcal Q$ be a finite subset of $\mathcal P$, and let $Q$ be the product of the primes in $\mathcal Q$. Suppose that, for some $\epsilon$ satisfying $0<\epsilon<1/200$, the inequality $$\prod_{p\in\mathcal P\setminus\mathcal Q, u\leq p<z}(1-g_n(p))^{-1}<(1+\epsilon)\frac{\log z}{\log u}$$ holds for all $n$ and $1<u<z$. Then for any $D\geq z$ there is the upper bound $$S(A,\mathcal P, z)<(F_0(s)+\epsilon e^{14-s})X+R$$ and for any $D\geq z^2$ there is the lower bound $$S(A,\mathcal P,z)>(f_0(s)-\epsilon e^{14-s})X-R$$ for some functions $F_0(s)$ and $f_0(s)$ where $s=\frac{\log D}{\log z}$, $F_0(s)=1+O(e^{-s})$, $f_0(s)=1-O(e^{-s})$, $$X=\sum_{n=1}^\infty a(n)\prod_{p\in P(z)}(1-g_n(p))$$ and the remainder term is $$R=\sum_{d|P(z),d<DQ}|r(d)|.$$ If there is a multiplicative function $g(d)$ such that $g_n(d)=g(d)$ for all $n$, then $$X=V(z)|A|$$ where $$V(z)=\prod_{p|P(z)}(1-g(p)).$$
\end{theorem}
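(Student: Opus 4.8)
The plan is to prove the theorem by the classical combinatorial method of the one–dimensional (linear) sieve: construct explicit Rosser--Iwaniec upper- and lower-bound weights $\lambda_d^{\pm}$ at level $D$, bound the indicator of $(n,P(z))=1$ above and below by $\sum_{d\mid n}\lambda_d^{\pm}$, and then evaluate the resulting main terms through the delay-differential system governing a sieve of dimension one. \textbf{Step 1 (weights).} For squarefree $d\mid P(z)$ with factorisation $d=p_1p_2\cdots p_r$, $p_1>p_2>\cdots>p_r$, all $p_i\in\mathcal P\setminus\mathcal Q$, I would set $\lambda_d^{+}=\mu(d)$ exactly when $p_1\cdots p_{j-1}\,p_j^{\,3}<D$ for every odd $j\le r$ (and $\lambda_d^{+}=0$ otherwise), and define $\lambda_d^{-}$ by imposing those same inequalities at every \emph{even} index $j$ instead; the finitely many primes of $\mathcal Q$ are allowed to divide $d$ with no restriction. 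From the truncation rule one checks directly that $\lambda_d^{\pm}$ is supported on squarefree $d<DQ$ dividing $P(z)$, that $\lambda_1^{\pm}=1$, that the lower-bound weights produce a non-trivial estimate only when $D\ge z^2$ (since the function $f_0$ below vanishes on $(0,2]$) while the upper-bound weights already work for $D\ge z$, and — by the sign-alternation property of the truncated Buchstab recursion — that
\[
\sum_{d\mid n}\lambda_d^{-}\ \le\ \sum_{d\mid(n,P(z))}\mu(d)\ \le\ \sum_{d\mid n}\lambda_d^{+}\qquad(n\ge 1),
\]
the middle sum equalling $1$ if $(n,P(z))=1$ and $0$ otherwise.

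\textbf{Step 2 (linearisation).} Multiplying these inequalities by $a(n)\ge0$, summing over $n$, and using the multiplicativity of each $g_n$ in the identity $|A_d|=\sum_n a(n)g_n(d)+r(d)$, I obtain
\[
S(A,\mathcal P,z)\ \le\ \sum_{d\mid P(z)}\lambda_d^{+}\sum_{n}a(n)g_n(d)\ +\ \sum_{\substack{d\mid P(z)\\ d<DQ}}|r(d)|
\]
together with the matching lower bound using $\lambda_d^{-}$; the second sum is exactly $R$, and the enlargement of its range from $d<D$ to $d<DQ$ is precisely the cost of handling the primes of $\mathcal Q$ by an exact (Eratosthenes--Legendre) step rather than by the sieve itself — which is also why the local-density hypothesis is only imposed over $\mathcal P\setminus\mathcal Q$. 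It then remains to bound the main term $M^{+}:=\sum_n a(n)\sum_{d\mid P(z)}\lambda_d^{+}g_n(d)$ by $(F_0(s)+\epsilon e^{14-s})X$ and $M^{-}:=\sum_n a(n)\sum_{d\mid P(z)}\lambda_d^{-}g_n(d)$ from below by $(f_0(s)-\epsilon e^{14-s})X$, where $s=\log D/\log z$. If all $g_n$ equal a single $g$, then $\sum_n a(n)\prod_{p\mid P(z)}(1-g(p))=|A|\,V(z)$, which is the claimed identity $X=V(z)|A|$; in general the inner sum $\sum_{d\mid P(z)}\lambda_d^{\pm}g_n(d)$ is compared, for each fixed $n$, with $F_0(s)$ (resp. $f_0(s)$) times $V_{g_n}(z):=\prod_{p\mid P(z)}(1-g_n(p))$, and summing over $n$ converts $V_{g_n}(z)$ into $X$.

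\textbf{Step 3 (the one-dimensional estimate --- the analytic core).} Fix $n$ and write $h=g_n$. Iterating Buchstab's identity
\[
S(B,\mathcal P,z)=S(B,\mathcal P,w)-\sum_{\substack{w\le p<z\\ p\in\mathcal P\setminus\mathcal Q}}S(B_p,\mathcal P,p)
\]
with the truncation pattern built into $\lambda^{\pm}$ reduces the evaluation of $\sum_{d\mid P(z)}\lambda_d^{\pm}h(d)$ to the behaviour of the continuous functions $F,f$ that solve
\[
(sF(s))'=f(s-1),\qquad (sf(s))'=F(s-1)\qquad(s>2),
\]
with $f\equiv0$ on $(0,2]$ and the standard normalisation of $sF(s)$ on $(0,3]$; these are precisely the functions satisfying $F(s)=1+O(e^{-s})$ and $f(s)=1-O(e^{-s})$, and one sets $F_0=F$, $f_0=f$. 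The hypothesis $\prod_{u\le p<z,\ p\in\mathcal P\setminus\mathcal Q}(1-h(p))^{-1}<(1+\epsilon)\frac{\log z}{\log u}$ for $1<u<z$ says exactly that the sieve has dimension at most one, and carrying this bound through each step of the Buchstab iteration yields
\[
\sum_{d\mid P(z)}\lambda_d^{+}h(d)\ \le\ \big(F(s)+\epsilon e^{14-s}\big)V_h(z),\qquad \sum_{d\mid P(z)}\lambda_d^{-}h(d)\ \ge\ \big(f(s)-\epsilon e^{14-s}\big)V_h(z),
\]
uniformly in $n$. Multiplying by $a(n)\ge0$ and summing over $n$ turns $V_{g_n}(z)$ into $X$ and gives the required bounds on $M^{\pm}$, which completes the proof.

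\textbf{Main obstacle.} Essentially all the work is in Step 3: proving that the truncated Rosser--Iwaniec weights are \emph{asymptotically optimal} for a dimension-one sieve, and that the defect $\epsilon$ in the local-density hypothesis accumulates over the entire Buchstab iteration to no more than $\epsilon e^{14-s}$. This requires the detailed theory of the coupled system $(sF)'=f(s-1)$, $(sf)'=F(s-1)$ --- existence and uniqueness, the bounds $f(s)\le1\le F(s)$, and the exponential decay $F(s)-1,\ 1-f(s)\ll e^{-s}$ --- together with a careful bookkeeping of the iteration, from which the explicit shift $14$ in $\epsilon e^{14-s}$ emerges. Steps 1 and 2 and the identity $X=V(z)|A|$ are routine by comparison; and since this is Jurkat--Richert's theorem, one may of course also simply invoke \cite{MN} for the complete argument.
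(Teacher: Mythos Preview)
The paper does not prove this statement at all: Theorem~\ref{th31} is quoted verbatim from \cite[Theorem~9.7]{MN} and used as a black box, with only a remark pointing to \cite[Theorem~9.4]{MN} for the explicit forms of $F_0$ and $f_0$. So there is no ``paper's own proof'' to compare against.

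Your sketch is a faithful outline of the standard Jurkat--Richert/Rosser--Iwaniec argument and is essentially what one finds in \cite{MN}: the combinatorial weights $\lambda_d^{\pm}$, the sandwich on the indicator of $(n,P(z))=1$, the Buchstab iteration, and the delay-differential system for $F_0,f_0$. You correctly identify that the analytic core is Step~3, and you also correctly note that one may simply invoke \cite{MN}; in the context of this paper that is exactly what is done. One minor point: the generalisation from a single $g$ to $n$-dependent $g_n$ is already the form stated in \cite[Theorem~9.7]{MN}, so your per-$n$ reduction in Step~2 matches that source. In short, your proposal is fine as a proof sketch, but for the purposes of this paper a bare citation suffices.
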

\begin{remark}
The explicit expressions of the functions $F_0(s)$ and $f_0(s)$ can be found in \cite[Theorem 9.4]{MN}.
\end{remark}

The following theorem will be used to verify an assumption in Theorem~\ref{th31}.
\begin{theorem}{\cite[Theorem 6.9]{MN}}\label{th32}
For any $\epsilon>0$, there exists a number $u_1(\epsilon)>0$ such that $$\prod_{u\leq p<z}\left(1-\frac1p\right)^{-1}<(1+\epsilon/3)\frac{\log z}{\log u}$$ for any $u_1(\epsilon)\leq u<z$.
\end{theorem}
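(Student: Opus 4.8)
This is a uniform quantitative form of Mertens' third theorem, and the plan is to deduce it from the classical asymptotic
$$\prod_{p<x}\left(1-\frac1p\right)^{-1}=e^{\gamma}\log x\,(1+\theta(x)),\qquad |\theta(x)|\leq\frac{C}{\log x}\quad(x\geq 3),$$
where $\gamma$ is the Euler--Mascheroni constant and $C>0$ is an absolute constant. If one does not wish to quote this directly, it can be reproved from the Chebyshev-type estimate $\sum_{p<x}\frac{\log p}{p}=\log x+O(1)$, partial summation yielding $\sum_{p<x}\frac1p=\log\log x+M+O(1/\log x)$, and then exponentiation after writing $-\log(1-1/p)=\frac1p+O(1/p^2)$ and using the convergence of $\sum_p\bigl(\log(1-1/p)^{-1}-\frac1p\bigr)$.

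Granting this, for $3\leq u<z$ I would write
$$\prod_{u\leq p<z}\left(1-\frac1p\right)^{-1}=\frac{\prod_{p<z}\left(1-\frac1p\right)^{-1}}{\prod_{p<u}\left(1-\frac1p\right)^{-1}}=\frac{\log z}{\log u}\cdot\frac{1+\theta(z)}{1+\theta(u)}.$$
Since $u<z$ forces $|\theta(z)|\leq C/\log z\leq C/\log u$, one has $\dfrac{1+\theta(z)}{1+\theta(u)}\leq\dfrac{1+C/\log u}{1-C/\log u}$ as soon as $u$ is large enough that $C/\log u<1$, and the right-hand side tends to $1$ as $u\to\infty$. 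Hence there is a threshold $u_1(\epsilon)$, depending only on $\epsilon$ and the absolute constant $C$, such that $\dfrac{1+C/\log u}{1-C/\log u}<1+\epsilon/3$ for all $u\geq u_1(\epsilon)$, which yields the desired bound for every $u_1(\epsilon)\leq u<z$.

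The only substantive ingredient is the error term in Mertens' formula with an \emph{absolute} implied constant; the rest is routine bookkeeping, so I do not anticipate a genuine obstacle. The one point to watch is that the error must be controlled uniformly in $z\geq u$, but this is automatic: because $\log z\geq\log u$, smallness of $\theta(u)$ already forces smallness of $\theta(z)$, so the ratio $\frac{1+\theta(z)}{1+\theta(u)}$ is pinned near $1$ by $u$ alone. If one substitutes the explicit bounds of Rosser--Schoenfeld for the $O$-estimates above, the threshold $u_1(\epsilon)$ can moreover be made effective.
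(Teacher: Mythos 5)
The paper states this result as a quotation from \cite{MN} and gives no proof of its own, so there is nothing internal to compare against; your argument is correct and is precisely the standard deduction — writing the partial Euler product over $u\leq p<z$ as a ratio of two complete products, invoking Mertens' third theorem with an absolute error constant $|\theta(x)|\leq C/\log x$, and observing that $\log z\geq\log u$ pins both error factors by $u$ alone. This is essentially the same route taken in the cited reference, and I see no gap.
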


We will also need the following effective equidistribution of discrete horocycle orbits. Note that in the statement, the homogeneous space is denoted by $\Gamma\backslash G$ (the space of right $\Gamma$-cosets).

\begin{theorem}{\cite[Theorem 1.2]{Z}}\label{th33}
Let $T>K\geq 1$ and $f\in C^\infty(\Gamma\backslash G)$ such that $$\int_{\Gamma\backslash G}fd\mu=0\textup{ and }\|f\|_{\infty,4}<\infty.$$ Suppose that $q\in\Gamma\backslash G$ satisfies $$r=r(q,T):=T\cdot e^{-\dist(g_{\log T}(q))}\geq1.$$ Then we have $$\left|\frac1{T/K}\sum_{j\in\mathbb Z, 0\leq Kj<T}f(qu_0(Kj))\right|\ll\frac{K^{\frac12}\ln^{\frac32}(r+2)}{r^{\beta/2}}\|f\|_{\infty,4}$$ for some $\beta\in(0,1/4)$ depending  only on the rate in the mixing property of the unipotent flow $u(t)$ and the spectral gap of the Laplacian on $\Gamma\backslash\mathcal H^2$.
\end{theorem}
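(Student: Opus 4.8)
The plan is to run the estimate through the automorphic spectral decomposition of $L^2(\Gamma\backslash G)$ under the right $G$-action, using that $U=\{u_0(s)\}$ acts essentially by a Fourier mode in each irreducible piece; the non-uniformity of $\Gamma$ forces one to isolate the continuous (Eisenstein) spectrum, and it is there that both the parameter $r$ and the logarithmic factor arise. Put $M=\#\{j\in\mathbb Z:0\le Kj<T\}\asymp T/K$; we must bound $A:=\frac1M\sum_{0\le j<M}f(qu_0(Kj))$. Since $\int_{\Gamma\backslash G}f\,d\mu=0$, I would decompose $f=f_{\mathrm{disc}}+f_{\mathrm{Eis}}$ into its projections onto the discrete spectrum (cusp forms together with the residual spectrum, constants removed) and onto the continuous spectrum spanned by the Eisenstein series of the finitely many cusps (see, e.g., \cite{IK}); the hypothesis $\|f\|_{\infty,4}<\infty$ is present precisely to make both expansions, and the integrations by parts below, converge with room to spare.

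For $f_{\mathrm{disc}}$, expand in an orthonormal basis of eigenfunctions, equivalently decompose $f_{\mathrm{disc}}$ into irreducible unitary $G$-subrepresentations $\pi_\ell\subset L^2(\Gamma\backslash G)$. In a standard model of $\pi_\ell$ in which $u_0(s)$ acts by the modulation $v(x)\mapsto e^{2\pi i s x}v(x)$, the function $s\mapsto\psi_\ell(qu_0(s))$ (for a smooth vector $\psi_\ell\in\pi_\ell$) is the Fourier transform of a fixed function $g_\ell$ whose relevant norms are controlled by Sobolev norms of $\psi_\ell$ and by the spectral parameter of $\pi_\ell$. I would estimate the discrete sum $\sum_{0\le j<M}\psi_\ell(qu_0(Kj))$ by Poisson summation after inserting a smooth cutoff of scale $M$: the zero frequency contributes $O(T^{-1})$ times a controlled quantity, the remaining frequencies decay by the smoothness of $g_\ell$, and the dual sum lives on a lattice of spacing $1/K$, which is the source of the $K^{1/2}$ loss after normalization by $M$. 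Summing over $\ell$ against the rapidly decaying expansion coefficients (this is where the order-$4$ Sobolev norm is consumed, against the Weyl law for the cuspidal spectrum) yields a genuine power saving; the exponent $\beta\in(0,1]$ is dictated by the lowest complementary-series representation occurring in $f_{\mathrm{disc}}$, whose parameter is pushed away from the trivial representation exactly by the spectral gap of the Laplacian on $\Gamma\backslash\mathcal H^2$ --- equivalently by the mixing rate of $u(t)$.

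The Eisenstein part $f_{\mathrm{Eis}}$ is the heart of the matter and the place where $r=Te^{-\dist(g_{\log T}(q))}$ enters. Here there is no spectral gap, so one can only hope for a logarithmic gain beyond the intrinsic scale of the problem. I would expand $f_{\mathrm{Eis}}$ into incomplete Eisenstein series and reduce to bounding sums $\sum_{0\le j<M}E(qu_0(Kj),\tfrac12+it)$, one cusp at a time and integrated against a smooth weight in $t$, via the Fourier expansion of $E(\cdot,\tfrac12+it)$ along the horocycle into its constant term $y^{1/2\pm it}$ plus Whittaker terms. The renormalization $u_0(s)=a_{-\log T}\,u_0(sT^{-1})\,a_{\log T}$ carries the arc $\{qu_0(s):0\le s<T\}$ by the geodesic flow for time $\log T$ onto a unit-length arc based at $g_{\log T}(q)$, and this is why $r$ --- and not $T$ --- is the effective ``length of the arc as seen against the compact core'': when $g_{\log T}(q)$ sits deep in a cusp (injectivity radius $\asymp e^{-\dist(g_{\log T}(q))}$) the arc is essentially absorbed by that cusp, and $r$ is small. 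Carrying out the truncation of the constant term and the Whittaker terms carefully produces the Eisenstein contribution of size $\ll K^{1/2}\ln^{3/2}(r+2)\,r^{-1/2}\|f\|_{\infty,4}$; combined with the discrete estimate, and using $\beta\le1$, this gives the asserted bound $|A|\ll K^{1/2}\ln^{3/2}(r+2)\,r^{-\beta/2}\|f\|_{\infty,4}$. It also explains the hypothesis $r\ge1$: below that threshold the arc is too short relative to its cusp excursion to equidistribute at all.

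The step I expect to be the main obstacle is exactly the uniform-in-$q$ treatment of the Eisenstein term: one must bound the truncation error of the several Eisenstein series along a completely arbitrary horocycle arc, tracking the length $T$, the sampling spacing $K$, and the cusp excursion of the arc simultaneously, and verify that every error term consolidates into the single clean quantity $r=Te^{-\dist(g_{\log T}(q))}$ carrying exactly the exponent $\beta/2$ together with the stated power of $\ln(r+2)$. By contrast, the cuspidal estimate is a comparatively soft consequence of the spectral gap and of smoothness, and the initial reduction together with the geometric reading of $r$ via $g_{\log T}(q)$ is routine in the framework of \cite{CZ}.
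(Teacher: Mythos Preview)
The paper does not contain a proof of Theorem~\ref{th33}; it is quoted verbatim as \cite[Theorem~1.2]{Z} and used as a black box, with only a one-line remark that the hypothesis $T>K>2$ in the original may be relaxed to $T>K\ge1$. There is therefore nothing in this paper to compare your sketch against.

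That said, your outline is a plausible description of how the proof in \cite{Z} actually proceeds: decompose $f$ spectrally, handle the cuspidal part via the spectral gap (this is where $\beta$ comes from), and treat the Eisenstein contribution by the Fourier expansion at the cusps together with the geodesic renormalization $qu_0(s)=g_{-\log T}(q)\,u_0(s/T)\,a_{\log T}$, which explains the appearance of $r=Te^{-\dist(g_{\log T}(q))}$. Two small cautions. First, your identification of the ``main obstacle'' is correct in spirit, but in \cite{Z} the uniform Eisenstein estimate is obtained not by direct truncation bookkeeping but via an effective equidistribution theorem for the continuous horocycle segment (Str\"ombergsson-type) combined with a passage from the continuous integral to the discrete sum; the $K^{1/2}$ loss arises in that discretization step, not from Poisson summation per se. Second, your remark that $\beta\le1$ and that the Eisenstein part contributes $r^{-1/2}$ is morally right, but the final bound in \cite{Z} absorbs everything into a single exponent $\beta/2$ governed by the worst of the two contributions, so one should not read the statement as asserting that the cuspidal and Eisenstein rates are separately visible.
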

\begin{remark}
Theorem 1.2 in \cite{Z} assumes that $T>K>2$. One can verify that the condition $T>K\geq1$ works equally well in the proof of \cite[Theorem 1.2]{Z}.
\end{remark}



\begin{proof}[Proof of Theorem~\ref{th12} for Diophantine points $p$ of type $\kappa=1-1/10000$]
Let $p\in G/\Gamma$ which is not $U$-periodic. Suppose that $p$ is Diophantine of type $\kappa=1-1/10000$. We prove that there exists $L\in\mathbb N$ such that the orbit $\{u_0(-x)\cdot p: x\in\Omega(L)\}$ is dense in $G/\Gamma$.

We consider the point $p^{-1}\in\Gamma\backslash G$. Pick a non-negative compactly supported smooth function $f\neq0$ on $\Gamma\backslash G$ and fix a sufficiently large number $N\in\mathbb N$. Define 
$$\begin{cases} a(n)=f(p^{-1}\cdot u_0(n)) & 0\leq n\leq N \\ a(n)=0 & n>N\end{cases}.$$ We take the set $\mathcal P$ in Theorem~\ref{th31} to be the set of all prime numbers. Then using the notation in Theorem~\ref{th31} and the result in Theorem~\ref{th33}, we have 
\begin{align*}
|A_d|=&\sum_{\substack{1\leq n\leq N\\d|n}}a(n)=\sum_{\substack{1\leq n\leq N\\d|n}}f(p^{-1}\cdot u_0(n))=\sum_{1\leq dj\leq N}f(p^{-1}\cdot u_0(jd))\\
=&\frac Nd\int_{\Gamma\backslash G}fd\mu+O\left(\left(\frac Nd\right)\frac{d^\frac12\ln^{3/2}(r+2)}{r^{\beta/2}}\|f\|_{\infty,4}\right)+O\left(\|f\|_{\infty,4}\right)\\
=&\sum_{0\leq n\leq N}\frac1d a(n)+O\left(\left(\frac Nd\right)\frac{d^\frac12\ln^{3/2}(r+2)}{r^{\beta/2}}\|f\|_{\infty,4}\right)+O\left(\left(\frac Nd\right)\frac{\ln^{3/2}(r+2)}{r^{\beta/2}}\|f\|_{\infty,4}\right)+O\left(\|f\|_{\infty,4}\right)\\
=&\sum_{1\leq n\leq N}\frac1d a(n)+O\left(\left(\left(\frac Nd\right)\frac{2d^\frac12\ln^{3/2}(r+2)}{r^{\beta/2}}+1\right)\|f\|_{\infty,4}\right).
\end{align*}
Note that here by Lemma~\ref{l31}, we have $r=r(p^{-1},N)\gg N^{1-\kappa}>1$. In Theorem~\ref{th31}, we take $g_n(d)=g(d)=1/d$ and then we have $$|r(d)|\ll\left(\left(\frac Nd\right)\frac{2d^\frac12\ln^{3/2}(r+2)}{r^{\beta/2}}+1\right)\|f\|_{\infty,4}.$$ 

Now set $z=N^\alpha$ for some small constant $\alpha>0$ which will be determined later. Let $s>100$ be a sufficiently large number so that $f_0(s)>0.1$, where $f_0(s)$ is defined as in Theorem~\ref{th31}. Fix $\epsilon\in(0,1/200)$ and take $u_1(3\epsilon)>0$ as in Theorem~\ref{th32}. Let $\mathcal Q$ be the subset of primes $<u_1(3\epsilon)$. Then by Theorem~\ref{th32}, the inequality $$\prod_{p\in\mathcal P\setminus\mathcal Q, u\leq p<z}(1-g(p))^{-1}<(1+\epsilon)\frac{\log z}{\log u}$$ holds for $1<u<z$. By Theorem~\ref{th31}, we have $$S(A,\mathcal P,z)>0.01V(z)|A|-R$$ where $D=z^s=N^{\alpha s}$. By Theorem~\ref{th32}, Theorem~\ref{th33} and Lemma~\ref{l31} we can compute that $$|A|/N\sim \int_{\Gamma\backslash G}fd\mu\textup{ and }V(z)\gg1/\log N\textup{ for sufficiently large }N.$$ Moreover, if $\alpha$ is chosen to be sufficiently small, then there exists a constant $\delta>0$ such that $$|R|\ll N^{1-\delta}.$$ Hence $S(A,\mathcal P,z)>0$ if $N$ is sufficiently large. Note that in the formula of $S(A,\mathcal P,z)$, any $n\leq N$ coprime to $P(z)$ has at most $[1/\alpha]+1$ prime factors. If we take $L=[1/\alpha]+1$ and $\Omega(L)$ the set of $L$-almost primes, then we obtain that $$\sum_{n\in\Omega(L)} f(p^{-1}\cdot u_0(n))\geq S(A,\mathcal P,z)>0 \textup{ for sufficiently large $N$}.$$ Since $f\neq0$ is any non-negatively compactly supported smooth function, we conclude that the orbit $\{p^{-1}u_0(x):x\in\Omega(L)\}$ is dense in $\Gamma\backslash G$. Consequently, the orbit $\{u_0(-x)p:x\in\Omega(L)\}$ is dense in $G/\Gamma$. This completes the proof of Theorem~\ref{th12} when $p$ is a Diophantine point of type $\kappa=1-1/10000$.
\end{proof}

\section{Proof of Theorem~\ref{th12} for non-Diophantine points}\label{nonDiophantine}
In this section, we assume that $p\in G/\Gamma$ is neither $U$-periodic nor Diophantine of type $\kappa=1-1/10000$, and prove that there exists $L\in\mathbb N$ such that the orbit $\{u_0(-x)p: x\in\Omega(L)\}$ is dense in $G/\Gamma$.

We need the following two theorems.

\begin{theorem}[{\cite{KM} and \cite[Lemma 9.7]{V}}]
Let $\sigma_j\in\Delta$ where $\Delta$ is the finite set defined in Theorem~\ref{domains}, and denote by $l_j$ the period of the $U$-periodic orbit $U\cdot\sigma_j\Gamma$. Then there exists a constant $\rho>0$ such that for any $f\in C^\infty(G/\Gamma)$ with $\|f\|_{\infty,1}<\infty$ and any $t\geq 0$, we have $$\left|\frac1{l_j}\int_0^{l_j} f(a_{-t}u_0(s)\sigma_j\Gamma)ds-\int fd_{\mu_{G/\Gamma}}\right|\ll e^{-\rho t}\|f\|_{\infty,1}.$$ Here the implicit constant depends only on $\sigma_j$, $G$ and $\Gamma$.
\end{theorem}

\begin{theorem}[{\cite[Corollary 6.2]{D1}}]
The set of $U$-periodic points in $G/\Gamma$ is equal to $\bigcup_{j=1}^nAU\sigma_j\Gamma$ where $\Delta=\{\sigma_j\}_{j=1}^n$ is the finite set defined in Theorem~\ref{domains}.
\end{theorem}

Combining these two results, we obtain the following
\begin{theorem}\label{l41}
There exists a constant $\rho>0$ such that for any $U$-periodic point $q$ with period $l_q\geq 1$ and any $f\in C^\infty(G/\Gamma)$ with $\|f\|_{\infty,1}<\infty$ we have $$\left|\frac1{l_q}\int_{0}^{l_q}f(u_0(s)q)ds-\int_{G/\Gamma}fd\mu_{G/\Gamma}\right|\ll\frac1{l_q^{\rho}}\|f\|_{\infty,1}.$$ Here the implicit constant depends only on $G/\Gamma$.
\end{theorem}

The following theorem plays an important role in the proof of Theorem~\ref{th12}.
\begin{theorem}\label{l32}
Let $f\in C^\infty(G/\Gamma)$ with $\|f\|_{\infty,1}<\infty$ and $\int_{G/\Gamma} fd\mu_{G/\Gamma}=0$, and let $q$ be a $U$-periodic point in $G/\Gamma$ with period $l_q\geq 1$. Then there exists a constant $\theta>0$ (independent of $q$ and $f$) such that for any $T\geq K>0$ with $1/{l_q^{\theta}}+{l_q}/{T^{\frac12}}<0.01K$, we have $$\left|\sum_{0<Kj<T} f(u_0(-Kj)q)\right|\ll\left(1+\sqrt{\frac {T^2}K\left(\frac{1}{l_q^{\theta}}+\frac{l_q}{\sqrt{T}}\right)}\right)\|f\|_{\infty,1}.$$ The implicit constant depends only on $G$ and $\Gamma$.
\end{theorem}

The proof of Theorem~\ref{l32} is similar to the proof of \cite[Theorem 3.1]{V} and the proof of \cite[Theorem 1.2]{Z}. We first prove the following

\begin{lemma}[Cf.~{\cite[Lemma 3.1]{V} and \cite[Lemma 5.1]{Z}}]\label{l51}
There exists a constant $\theta>0$ which only depends on $G$ and $\Gamma$ such that for any $f\in C^\infty(G/\Gamma)$ with $\|f\|_{\infty,1}<\infty$ and $\int_{G/\Gamma} fd\mu_{G/\Gamma}=0$, any character $\psi:\mathbb R\to S^1$ and any $U$-periodic point $q$ with period $l_q\geq1$, we have $$\left|\frac1T\int_0^T\psi(t)f(u_0(-t)\cdot q)\right|\ll\left(\frac1{l_q^{\theta}}+\frac{l_q}{T^{\frac12}}\right)\|f\|_{\infty,1}.$$ The implicit constant depends only on $G$ and $\Gamma$.
\end{lemma}
\begin{proof}
Let $$\mu_{T,\psi}(f)=\frac1T\int_0^T\psi(t)f(u_0(-t)\cdot q)dt$$ and for any $H>0$ define $$\sigma_H(f)(x)=\frac1H\int_0^H\psi(s)f(u_0(-s)\cdot x)ds.$$ It follows from a direct calculation that $$|\mu_{T,\psi}(f)-\mu_{T,\psi}(\sigma_H(f))|\leq\frac HT\|f\|_{\infty,0}.$$ By Cauchy-Schwartz inequality, we have 
\begin{align*}
|\mu_{T,\psi}(\sigma_H(f))|\leq&\frac1T\left(\int_0^T|\psi(t)|^2dt\right)^{\frac12}\left(\int_0^T|\sigma_H(f)(u_0(-t)\cdot q)|^2dt\right)^{\frac12}\\
\leq&\left(\frac1T\int_0^T|\sigma_H(f)(u_0(-t)\cdot q)|^2dt\right)^{\frac12}\\
\leq&\left(\frac1{H^2}\int_0^H\int_0^H\left|\frac1T\int_0^T\overline{f^y}f^z(u_0(-t)q)dt\right|dydz\right)^{\frac12}
\end{align*}
where $f^y$ and $f^z$ denote the translates of $f$ by $u_0(-y)$ and $u_0(-z)$ respectively. By Theorem~\ref{l41}, there exists $\rho>0$ such that $$\left|\frac1T\int_0^T\overline{f^y}f^z(u_0(-t)q)dt-\int_{G/\Gamma}\overline{f^y}f^zd\mu_{G/\Gamma}\right|\ll\left(\frac1{l_q^\rho}+\frac{l_q}T\right)\|\overline{f^y}f^z\|_{\infty,1}.$$ By the mixing property of the unipotent flow $u_0(-t)$, there exists $\lambda>0$ such that $$\left|\int_{G/\Gamma}\overline{f^h}fd\mu_{G/\Gamma}\right|\ll\frac1{(1+|h|)^\lambda}\|f\|_{\infty,1}^2.$$ Then we can compute that
\begin{align*}
|\mu_{T,\psi}(\sigma_H(f))|\ll&\left(\frac1{H^2}\int_0^H\int_0^H\left|\int_{G/\Gamma}\overline{f^{y-z}}fd\mu_{G/\Gamma}\right|dydz+\frac1{H^2}\int_0^H\int_0^H\left(\frac1{l_q^\rho}+\frac{l_q}T\right)\|\overline{f^y}f^z\|_{\infty,1}dydz\right)^{\frac12}\\
\ll&\left(\frac1{H^2}\int_0^H\int_0^H\frac1{(1+|y-z|)^\lambda}dydz+\frac1{H^2}\int_0^H\int_0^H\left(\frac1{l_q^\rho}+\frac{l_q}T\right)yzdydz\right)^{\frac12}\|f\|_{\infty,1}\\
\ll&\left(H^{-\lambda}+H^2\left(\frac1{l_q^\rho}+\frac{l_q}T\right)\right)^{\frac12}\|f\|_{\infty,1}\leq\left(H^{-\lambda/2}+H\left(\frac1{l_q^{\rho/2}}+\frac{l_q^{1/2}}{T^{1/2}}\right)\right)\|f\|_{\infty,1}.
\end{align*}
We complete the proof of the lemma by setting $H=l_q^\epsilon$ for some sufficiently small $\epsilon>0$. 
\end{proof}

\begin{proof}[Proof of Theorem~\ref{l32}]
Let $0<\delta<0.5K$. Define $$g_\delta(x)=\max\left\{\frac1{\delta^2}(\delta-|x|),0\right\}\textup{ and }g(x)=\sum_{j\in\mathbb Z}g_{\delta}(x+Kj).$$ Write the Fourier series of $g(x)$ as $$g(x)=\sum_{k\in\mathbb Z}a_ke^{2\pi ikx/K}.$$ Then one can compute that $$\sum_{k\in\mathbb Z}|a_k|=|g(0)|=\frac1\delta.$$ It follows from a direct calculation that $$\left|\int_0^Tg(t)f(u_0(-t)\cdot q)dt-\sum_{0<Kj<T}f(u_0(-Kj)\cdot q)\right|\leq\left(2+\frac TK\cdot\delta\right)\|f\|_{\infty,1}.$$ By Lemma~\ref{l51}, there exists $\theta>0$ such that
\begin{align*}
\left|\int_0^Tg(t)f(u_0(-t)\cdot q)dt\right|\leq&\sum_k|a_k|\left|\int_0^Te^{2\pi ikt/K}f(u_0(-t)\cdot q)dt\right|\\
\ll& \frac T\delta\left(\frac1{l_q^{\theta}}+\frac{l_q}{T^{\frac12}}\right)\|f\|_{\infty,1}.
\end{align*}
Therefore, we have $$\left|\sum_{0<Kj<T}f(u_0(-Kj)\cdot q)\right|\ll\left(\frac T\delta\left(\frac1{l_q^{\theta}}+\frac{l_q}{T^{\frac12}}\right)+2+\frac TK\cdot\delta\right)\|f\|_{\infty,1}.$$
Set $\delta=\sqrt{K\left(\frac1{l_q^{\theta}}+\frac{l_q}{T^{\frac12}}\right)}$ and we conclude that $$\left|\sum_{0<Kj<T}f(u_0(-Kj)\cdot q)\right|\ll\left(\sqrt{\frac {T^2}K\left(\frac1{l_q^{\theta}}+\frac{l_q}{T^{\frac12}}\right)}+1\right)\|f\|_{\infty,1}.$$ This completes the proof of Theorem~\ref{l32}.
\end{proof}

By Bruhat decomposition, we know that $$G=UAU_-\cup\omega AU_-\textup{ and }G/\Gamma=UAU_-/\Gamma\cup\omega AU_-/\Gamma.$$ Since any point in $\omega AU_-/\Gamma$ is a $U$-periodic point, we have $p\in UAU_-\Gamma$. Now write $p=uan\Gamma$ for $u\in U$, $a\in A$ and $n\in U_-$. Since $a$ normalizes $U$, we have $$u_0(t)\cdot uan\Gamma=(ua)\cdot u_0(ct)\cdot n\Gamma$$ for some constant $c>0$. Hence to prove Theorem~\ref{th12} for the point $p$ which is not Diophantine of type $\kappa=1-1/10000$, it suffices to prove the following

\begin{proposition}\label{p31}
There exists $L\in\mathbb N$ such that for any $p\in U_-(e\Gamma)$ which is neither $U$-periodic nor Diophantine of type $\kappa=1-1/10000$, and for any $c>0$, the orbit $\{u_0(-cx)p: x\in\Omega(L)\}$ is dense in $G/\Gamma$.
\end{proposition}
\begin{proof}[Proof of Proposition~\ref{p31}]
Let $p\in U_-(e\Gamma)$ which is neither $U$-periodic nor Diophantine of type $\kappa=1-1/10000$, and fix $c>0$. By Proposition~\ref{p22}, we can find a sequence of distinct $U$-periodic points $q_k\in U_-(e\Gamma)$ with period $d(q_k)\to\infty$ such that $$d_{U_-}(p, q_k)\leq Cd(q_k)^{-\frac1{1-\kappa}}$$ for some constant $C>0$. Now choose a rational point $q_k$ and pick a non-negative compactly supported smooth function $f\neq 0$ on $G/\Gamma$. Define $$\begin{cases} a(n):=f(u_0(-cn)\cdot q_k) & n\leq N\\ a(n):=0 & n>N\end{cases}$$ and take the set $\mathcal P$ in Theorem~\ref{th31} to be the set of all prime numbers. Using the notation in Theorem~\ref{th31} and the result in Theorem~\ref{l32}, we can compute that
\begin{align*}
|A_d|=&\sum_{1\leq n\leq N,d|n}a(n)=\sum_{0<n\leq N,d|n}f(u_0(-cn)\cdot q_k)\\
=&\sum_{0<cdj\leq cN}f(u_0(-cdj)\cdot q_k)\\
=&\frac Nd\int_{G/\Gamma}fd\mu_{G/\Gamma}+O\left(\left(1+\sqrt{\frac {N^2}d\left(\frac{1}{d(q_k)^{\theta}}+\frac{d(q_k)}{\sqrt{N}}\right)}\right)\|f\|_{\infty,1}\right)\\
=&\frac1d\sum_{1\leq n\leq N}a(n)+O\left(\left(1+\sqrt{\frac {N^2}d\left(\frac{1}{d(q_k)^{\theta}}+\frac{d(q_k)}{\sqrt{N}}\right)}\right)\|f\|_{\infty,1}\right)
\end{align*}
for some $\theta>0$. (Here we remark that we will choose $N$ later so that $N$ and $d(q_k)$ meet the requirement in Theorem~\ref{l32}.) In Theorem~\ref{th31}, set $g_n(d)=g(d)=1/d$ and we have 
\begin{align*}
|r(d)|\ll&\left(1+\sqrt{\frac {N^2}d\left(\frac{1}{d(q_k)^{\theta}}+\frac{d(q_k)}{\sqrt{N}}\right)}\right)\|f\|_{\infty,1}\\
\leq&\left(1+\frac1{\sqrt d}\left(\frac N{d(q_k)^{\theta/2}}+N^{3/4}d(q_k)^{1/2}\right)\right)\|f\|_{\infty,1}.
\end{align*}
Let $z=N^\alpha$ for some small constant $\alpha>0$ which will be determined later. Let $s>100$ be a sufficiently large number so that $f_0(s)>0.1$ where $f_0(s)$ is the function defined in Theorem~\ref{th31}. Fix $\epsilon\in(0,1/200)$ and take $u_1(\epsilon)>0$ as in Theorem~\ref{th32}. Let $\mathcal Q$ be the subset of primes$<u_1(3\epsilon)$. Then by Theorem~\ref{th32}, the inequality $$\prod_{p\in\mathcal P\setminus\mathcal Q, u\leq p<z}(1-g(p))^{-1}<(1+\epsilon)\frac{\log z}{\log u}$$ holds for $1<u<z$. By Theorem~\ref{th31}, we have $$S(A,\mathcal P,z)>0.01V(z)|A|-R$$ where $D=z^s=N^{\alpha s}$. Here by Theorem~\ref{l32} and Theorem~\ref{th32}, if $N$ and $d(q_k)$ are sufficiently large, then $$|A|/N\sim\int_{G/\Gamma}fd\mu_{G/\Gamma}>0\textup{ and }V(z)\gg1/\log N.$$ Moreover, we can compute that $$R\ll\left(N^{\alpha s}+N^{\alpha s}\left(\frac N{d(q_k)^{\theta/2}}+N^{3/4}d(q_k)^{1/2}\right)\right)\|f\|_{\infty,1}.$$

Now write $p=\left(\begin{array}{cc} 1 & 0\\ x & 1\end{array}\right)\Gamma$ and $q_k=\left(\begin{array}{cc} 1 & 0\\ y_k & 1\end{array}\right)\Gamma$. For any $s\in\mathbb R$ and $s\geq1$, we calculate
\begin{align*}
u_0(s)q_k=\left(\begin{array}{cc} 1 & s\\ 0 & 1\end{array}\right)\left(\begin{array}{cc} 1 & 0\\ y_k-x & 1 \end{array}\right)\left(\begin{array}{cc} 1 & -s\\ 0 & 1\end{array}\right)u_0(s)p.
\end{align*}
Using the same calculation as in Lemma~\ref{l23}, we can compute that the distance between $u_0(s)p$ and $u_0(s)q_k$ is bounded by $$|s^2(y_k-x)|\sim s^2d_{U_-}(p,q_k).$$ Therefore, we have
\begin{align*}
|f(u_0(s)p)-f(u_0(s)q_k)|\ll\|f\|_{\infty,1}s^2d_{U_-}(p,q_k)\ll\|f\|_{\infty,1}s^2d(q_k)^{-\frac1{1-\kappa}}
\end{align*}
and
\begin{align*}
&\left|\sum_{1\leq n\leq N, (P(z),n)=1}f(u_0(-cn)p)-S(A,\mathcal P,z)\right|\\
\ll&\sum_{1\leq n\leq N, (P(z),n)=1}n^2d(q_k)^{-\frac1{1-\kappa}}\|f\|_{\infty,1}\\
\ll&N^3d(q_k)^{-\frac1{1-\kappa}}\|f\|_{\infty,1}.
\end{align*}
This implies that 
\begin{align}\label{equation}
&\sum_{1\leq n\leq N, (P(z),n)=1}f(u_0(-cn)p)\nonumber\\ 
=&S(A,\mathcal P,z)-O\left(N^3d(q_k)^{-\frac1{1-\kappa}}\|f\|_{\infty,1}\right)\nonumber\\
\geq&0.01V(z)|A|-O\left(\left(N^{\alpha s}+N^{\alpha s}\left(\frac N{d(q_k)^{\theta/2}}+N^{3/4}d(q_k)^{1/2}\right)\right)+N^3d(q_k)^{-\frac1{1-\kappa}}\right)\|f\|_{\infty,1}.
\end{align}
Let $N:=N_k=d(q_k)^{100}$. Then if $\alpha$ is chosen to be sufficiently small, then we have $$N^{\alpha s}\ll N^{1-\epsilon}, N^{\alpha s}\frac N{d(q_k)^{\theta/2}}\ll N^{1-\epsilon}, N^{\alpha s}N^{3/4}d(q_k)^{1/2}\ll N^{1-\epsilon}, N^3d(q_k)^{-\frac1{1-\kappa}}\ll N^{1-\epsilon}$$ for some $\epsilon>0$. Moreover, $N=N_k$ and $d(q_k)$ meet the requirement in Theorem~\ref{l32} if $k$ is sufficiently large. Therefore, the main term in inequality~(\ref{equation}) is $0.01V(z)|A|$ and $$\sum_{1\leq n\leq N_k, (P(z),n)=1}f(u_0(-cn)p)>0$$ for sufficiently large $k$ (and for any non-negative compactly supported smooth function $f\neq0$). Note that any $n$ coprime to $P(z)$ $(z=N_k^\alpha)$ has at most $[1/\alpha]+1$ prime factors. If we take $L=[1/\alpha]+1$ and $\Omega(L)$ the set of $L$-almost primes, then we obtain that $$\sum_{n\in\Omega(L)}f(u_0(-cn)p)>0$$ for any non-negative compactly supported smooth function $f\neq 0$. This implies that $\{u_0(-cx)p:x\in\Omega(L)\}$ is dense in $\PSL(2,\mathbb R)/\Gamma$. This completes the proof of Proposition~\ref{p31}.
\end{proof}

\begin{proof}[Proof of Theorem~\ref{th12}]
Note that if $L_1\geq L_2$, then $\Omega(L_1)\supset\Omega(L_2)$. Now combining results in \S\ref{sieve} and \S\ref{nonDiophantine}, we can conclude that there exists $L\in\mathbb N$ such that for any $p\in G/\Gamma$ which is not $u(t)$-periodic, the subset $\{u(x)p: x\in\Omega(L)\}$ is dense in $G/\Gamma$.
\end{proof}

\begin{bibdiv}
\begin{biblist}

\bib{B1}{article}{
      author={Bourgain, J.},
       title={On the pointwise ergodic theorem on {$L^p$} for arithmetic sets},
        date={1988},
        ISSN={0021-2172},
     journal={Israel J. Math.},
      volume={61},
      number={1},
       pages={73\ndash 84},
         url={https://doi.org/10.1007/BF02776302},
      review={\MR{937582}},
}

\bib{D}{article}{
      author={Dani, S.~G.},
       title={Invariant measures and minimal sets of horospherical flows},
        date={1981},
        ISSN={0020-9910},
     journal={Invent. Math.},
      volume={64},
      number={2},
       pages={357\ndash 385},
         url={https://doi.org/10.1007/BF01389173},
      review={\MR{629475}},
}

\bib{D1}{article}{
 author = {S.~G. Dani.},
 title = {Divergent trajectories of flows on homogeneous spaces and {D}iophantine approximation.},
 date = {1985},
 journal = {J. Reine Angew. Math.},
 volume = {359},
 pages = {55--89},
}

\bib{G}{article}{
      author={Gorodnik, Alexander},
       title={Open problems in dynamics and related fields},
        date={2007},
        ISSN={1930-5311},
     journal={J. Mod. Dyn.},
      volume={1},
      number={1},
       pages={1\ndash 35},
         url={https://doi.org/10.3934/jmd.2007.1.1},
      review={\MR{2261070}},
}

\bib{GR}{article}{
    AUTHOR = {Garland, H.},
    AUTHOR = {Raghunathan, M. S.},
     TITLE = {Fundamental domains for lattices in ({R}-)rank {$1$}
              semisimple {L}ie groups},
   JOURNAL = {Ann. of Math. (2)},
  FJOURNAL = {Annals of Mathematics. Second Series},
    VOLUME = {92},
      YEAR = {1970},
     PAGES = {279--326},
      ISSN = {0003-486X},
   MRCLASS = {22.50 (10.00)},
  review = {\MR{267041}},
MRREVIEWER = {J. A. Wolf},
       URL = {https://doi.org/10.2307/1970838},
}

\bib{GT}{article}{
    AUTHOR = {Green, Ben},
    AUTHOR = {Tao, Terence},
     TITLE = {The quantitative behaviour of polynomial orbits on
              nilmanifolds},
   JOURNAL = {Ann. of Math. (2)},
  FJOURNAL = {Annals of Mathematics. Second Series},
    VOLUME = {175},
      YEAR = {2012},
    NUMBER = {2},
     PAGES = {465--540},
      ISSN = {0003-486X},
   MRCLASS = {37A15},
  review = {\MR{2877065}},
MRREVIEWER = {Tamar Ziegler},
       URL = {https://doi.org/10.4007/annals.2012.175.2.2},
}

\bib{IK}{book}{
      author={Iwaniec, Henryk},
      author={Kowalski, Emmanuel},
       title={Analytic number theory},
      series={American Mathematical Society Colloquium Publications},
   publisher={American Mathematical Society, Providence, RI},
        date={2004},
      volume={53},
        ISBN={0-8218-3633-1},
         url={https://doi.org/10.1090/coll/053},
      review={\MR{2061214}},
}

\bib{K1}{article}{
      author={Katz, Asaf},
       title={On mixing and sparse ergodic theorems},
        date={2021},
        ISSN={1930-5311},
     journal={J. Mod. Dyn.},
      volume={17},
       pages={1\ndash 32},
         url={https://doi.org/10.3934/jmd.2021001},
      review={\MR{4251936}},
}

\bib{K2}{article}{
      author={Khalil, Osama},
       title={Pointwise equidistribution and translates of measures on
  homogeneous spaces},
        date={2020},
        ISSN={0143-3857},
     journal={Ergodic Theory Dynam. Systems},
      volume={40},
      number={2},
       pages={453\ndash 477},
         url={https://doi.org/10.1017/etds.2018.44},
      review={\MR{4048301}},
}

\bib{KM}{article}{
    AUTHOR = {Kleinbock, D. Y.},
    AUTHOR = {Margulis, G. A.},
     TITLE = {Logarithm laws for flows on homogeneous spaces},
   JOURNAL = {Invent. Math.},
  FJOURNAL = {Inventiones Mathematicae},
    VOLUME = {138},
      YEAR = {1999},
    NUMBER = {3},
     PAGES = {451--494},
      ISSN = {0020-9910},
   MRCLASS = {37C85 (11J99 22E40 37A30 37D30 37D40)},
  review = {\MR{1719827}},
MRREVIEWER = {S. G. Dani},
       URL = {https://doi.org/10.1007/s002220050350},
}

\bib{L}{article}{
    AUTHOR = {Leibman, A.},
     TITLE = {Pointwise convergence of ergodic averages for polynomial
              sequences of translations on a nilmanifold},
   JOURNAL = {Ergodic Theory Dynam. Systems},
  FJOURNAL = {Ergodic Theory and Dynamical Systems},
    VOLUME = {25},
      YEAR = {2005},
    NUMBER = {1},
     PAGES = {201--213},
      ISSN = {0143-3857},
   MRCLASS = {37A17 (22F30 28D15)},
 review = {\MR{2122919}},
MRREVIEWER = {Alexander Gorodnik},
       URL = {https://doi.org/10.1017/S0143385704000215},
}

\bib{M1}{incollection}{
      author={Margulis, G.~A.},
       title={Discrete subgroups and ergodic theory},
        date={1989},
   booktitle={Number theory, trace formulas and discrete groups ({O}slo,
  1987)},
   publisher={Academic Press, Boston, MA},
       pages={377\ndash 398},
      review={\MR{993328}},
}

\bib{MT}{article}{
      author={Margulis, G.~A.},
      author={Tomanov, G.~M.},
       title={Invariant measures for actions of unipotent groups over local
  fields on homogeneous spaces},
        date={1994},
        ISSN={0020-9910},
     journal={Invent. Math.},
      volume={116},
      number={1-3},
       pages={347\ndash 392},
         url={https://doi.org/10.1007/BF01231565},
      review={\MR{1253197}},
}

\bib{M2}{article}{
      author={McAdam, Taylor},
       title={Almost-prime times in horospherical flows on the space of
  lattices},
        date={2019},
        ISSN={1930-5311},
     journal={J. Mod. Dyn.},
      volume={15},
       pages={277\ndash 327},
         url={https://doi.org/10.3934/jmd.2019022},
      review={\MR{4042163}},
}

\bib{MN}{book}{
      author={Nathanson, Melvyn~B.},
       title={Additive number theory},
      series={Graduate Texts in Mathematics},
   publisher={Springer-Verlag, New York},
        date={1996},
      volume={164},
        ISBN={0-387-94656-X},
         url={https://doi.org/10.1007/978-1-4757-3845-2},
        note={The classical bases},
      review={\MR{1395371}},
}

\bib{R1}{article}{
      author={Ratner, Marina},
       title={On {R}aghunathan's measure conjecture},
        date={1991},
        ISSN={0003-486X},
     journal={Ann. of Math. (2)},
      volume={134},
      number={3},
       pages={545\ndash 607},
         url={https://doi.org/10.2307/2944357},
      review={\MR{1135878}},
}

\bib{R}{article}{
    author = {Ratner, Marina},
    title = {Raghunathan's conjectures for {${\rm SL}(2,\bold R)$}},
   date={1992},
   ISSN = {0021-2172},
   journal = {Israel J. Math.},
   volume = {80},
   number = {1-2},
    pages = {1\ndash 31},
    url = {https://doi.org/10.1007/BF02808152},
   review = {\MR{1248925}},
}

\bib{R2}{article}{
      author={Ratner, Marina},
       title={Raghunathan's topological conjecture and distributions of
  unipotent flows},
        date={1991},
        ISSN={0012-7094},
     journal={Duke Math. J.},
      volume={63},
      number={1},
       pages={235\ndash 280},
         url={https://doi.org/10.1215/S0012-7094-91-06311-8},
      review={\MR{1106945}},
}

\bib{SU}{article}{
      author={Sarnak, Peter},
      author={Ubis, Adri\'{a}n},
       title={The horocycle flow at prime times},
        date={2015},
        ISSN={0021-7824},
     journal={J. Math. Pures Appl. (9)},
      volume={103},
      number={2},
       pages={575\ndash 618},
         url={https://doi.org/10.1016/j.matpur.2014.07.004},
      review={\MR{3298371}},
}

\bib{Sh}{article}{
      author={Shah, Nimish~A.},
       title={Limit distributions of polynomial trajectories on homogeneous
  spaces},
        date={1994},
        ISSN={0012-7094},
     journal={Duke Math. J.},
      volume={75},
      number={3},
       pages={711\ndash 732},
         url={https://doi.org/10.1215/S0012-7094-94-07521-2},
      review={\MR{1291701}},
}

\bib{TV}{article}{
      author={Tanis, James},
      author={Vishe, Pankaj},
       title={Uniform bounds for period integrals and sparse equidistribution},
        date={2015},
        ISSN={1073-7928},
     journal={Int. Math. Res. Not. IMRN},
      number={24},
       pages={13728\ndash 13756},
         url={https://doi.org/10.1093/imrn/rnv115},
      review={\MR{3436162}},
}

\bib{V}{article}{
      author={Venkatesh, Akshay},
       title={Sparse equidistribution problems, period bounds and
  subconvexity},
        date={2010},
        ISSN={0003-486X},
     journal={Ann. of Math. (2)},
      volume={172},
      number={2},
       pages={989\ndash 1094},
         url={https://doi.org/10.4007/annals.2010.172.989},
      review={\MR{2680486}},
}

\bib{Z}{article}{
      author={Zheng, Cheng},
       title={Sparse equidistribution of unipotent orbits in finite-volume
  quotients of {$\text{PSL}(2,\Bbb{R})$}},
        date={2016},
        ISSN={1930-5311},
     journal={J. Mod. Dyn.},
      volume={10},
       pages={1\ndash 21},
         url={https://doi.org/10.3934/jmd.2016.10.1},
      review={\MR{3471070}},
}

\bib{CZ}{article}{
      author={Zheng, Cheng},
       title={A shrinking target problem with target at infinity in rank one
  homogeneous spaces},
        date={2019},
        ISSN={0026-9255},
     journal={Monatsh. Math.},
      volume={189},
      number={3},
       pages={549\ndash 592},
         url={https://doi.org/10.1007/s00605-019-01309-2},
      review={\MR{3976679}},
}

\end{biblist}
\end{bibdiv}

\end{document}